\documentclass[11pt,leqno,a4paper]{amsart}
\usepackage[top=2.2cm,bottom=2.2cm,right=2cm,left=2cm]{geometry}
\usepackage{amssymb,amsmath,amscd,graphicx,fontenc,amsthm,mathrsfs, mathtools}
\usepackage[frame, cmtip, arrow, matrix, line, graph, curve]{xy}
\usepackage{graphpap, color}
\usepackage[mathscr]{eucal}
\usepackage{ifthen}
\usepackage{relsize}

\usepackage{multirow}

%

\usepackage{fancyhdr}
\usepackage{indentfirst}
\usepackage{paralist}
\usepackage{pstricks}
\usepackage{bbm}
\usepackage{tikz}

\newtheorem{theorem}{Theorem}[section]

\newtheorem{proposition}[theorem]{Proposition}

\theoremstyle{remark}

\numberwithin{equation}{section}
\newcommand{\EE}{{\mathbb{E}}}
\newcommand{\NN}{{\mathbb{N}}}
\newcommand{\ZZ}{{\mathbb{Z}}}

\newcommand{\RR}{{\mathbb{R}}}

\renewcommand{\SS}{{\mathbb{S}}}

\DeclareMathOperator{\rk}{rk}

\DeclareMathOperator{\vol}{vol}

\newcommand{\diag}{\mathrm{diag}}

\newcommand{\tp}[1]{^\mathrm{t}{#1}}


\newcommand{\SL}{\mathrm{SL}}

\newcommand{\Id}{\mathrm{Id}}



\newcommand{\vx}{\mathbf x}

\newcommand{\vw}{\mathbf w}
\newcommand{\vv}{\mathbf v}

\providecommand{\ve}{\mathbf{ e}}

\newcommand{\vm}{\mathbf m}

\newcommand{\vk}{\mathbf k}



\newcommand{\Siegel}[2]{\mathcal S_{#1}(#2)}
\newcommand{\PSiegel}[2]{\widehat {\mathcal S}_{#1}(#2)}

\newcommand{\hatcal}[1]{\widehat{\mathcal{#1}}}

\renewcommand{\varpi}{\pi}

\newcommand{\origin}{O}

\definecolor{cmd}{rgb}{1.0, 0.35, 0.21}


\begin{document}
\title[Distribution of Primitive Lattice Points in Large Dimensions]{Distribution of Primitive Lattice Points in Large Dimensions}

\author{Jiyoung Han}


\maketitle
\begin{abstract}
We investigate the asymptotic behavior of the distribution of primitive lattice points in a symmetric Borel set $S_d\subset\RR^d$ as $d$ goes to infinity, under certain volume conditions on $S_d$. Our main technique involves exploring higher moment formulas for the primitive Siegel transform.
We first demonstrate that if the volume of $S_d$ remains fixed for all $d\in \NN$, then the distribution of the half the number of primitive lattice points in $S_d$ converges, in distribution, to the Poisson distribution of mean $\frac 1 2$. 
Furthermore, if the volume of $S_d$ goes to infinity subexponentially as $d$ approaches infinity, the normalized distribution of the half the number of primitive lattice points in $S_d$ converges, in distribution, to the normal distribution $\mathcal N(0,1)$.
We also extend these results to the setting of stochastic processes. 
This work is motivated by the contributions of Rogers \cite{Rogers55-2}, Södergren \cite{Sod2011} and Strömbergsson and Södergren \cite{StSo2019}.
\end{abstract}



\section{Introduction}\label{Section: Introduction}

For $d\ge 2$, one can understand $X_d=\SL_d(\RR)/\SL_d(\ZZ)$ as the space of unimodular lattices in $\RR^d$ via the map $g\SL_d(\ZZ)$ to $g\ZZ^d$, and let $\mu_d$ be the $\SL_d(\RR)$-invariant probability measure on $X_d$. Under this identification, one can define the \emph{Siegel transform}  
\begin{equation}\label{R Siegel}
\widetilde {f}(g\ZZ^d)
=\sum_{\vv\in \ZZ^d-\{\origin\}} f(g\vv),\quad \forall g\ZZ^d \in X_d
\end{equation}
for a bounded and compactly supported function $f$ on $\RR^d$. 
When we take $f$ as the indicator function of a Borel set $S\subseteq \RR^d$, the quantity $\widetilde{f}(g\ZZ^d)$ stands for the number of nontrivial lattice points of $g\ZZ^d$ contained in $A$, and this establishes a connection between the lattice-counting problems, geometry of numbers, and homogeneous dynamics \cite{Sch60, EMM98, EMM05, AM09, MM11, AM18, KY20,  AGY21, HLM}, see also \cite{HLM17, Han22, GH22, SKim24} for S-arithmetic and adelic settings, \cite{Fairchild2021, KS21, KY21, GKY22, SKim22, KY23, BFC} for other Siegel transforms on various homogeneous spaces.

Siegel's famous integral formula \cite{Sie98} says that the mean of $\widetilde{f}$ on $X_d$ with the measure $\mu_d$ is equal to the integral of $f$ with the usual Lebesgue measure on $\RR^d$. In \cite{Rogers55}, Rogers presented higher moment formulas for the Siegel transform (see also \cite{Schmidt57}). Using this result, he established that the asymptotic behavior of the number of lattice points in a Borel set $S_d \subseteq \RR^d$ with $\vol(S_d)=V$, for a fixed positive number $V$, is Poissonian.
In 2011, S$\ddot{\text{o}}$dergren \cite{Sod2011} developed this result of Rogers to the setting of stochastic processes. When the volume $\vol(S_d)$ diverges subexponenetially to infinity as the dimension $d$ increases, S$\ddot{\text{o}}$dergren and Str$\ddot{\text{o}}$mbergsson \cite{StSo2019} accomplished that the asymptotic behavior converges in distribution to normal distribution, based on the idea that the Poisson distribution of large mean resembles the normal distribution.
Recently, Alam, Ghosh and the author \cite{AGH2022} derived higher moment formulas (for rank $\ge 3$) in both the affine and the congruence cases, thereby facilitating analogues of the aforementioned applications.

In this article, our aim is to delve into higher moment formulas for the primitive Siegel transform, exploring their potential applications, where  the transform is defined as
\begin{equation}\label{P Siegel}
\widehat {f}(g\ZZ^d)
=\sum_{\vv\in P(\ZZ^d)} f(g\vv),\quad \forall g\ZZ^d \in X_d
\end{equation}
for a bounded and compactly supported function $f$. Here, $P(\ZZ^d)$ is the set of primitive integer vectors, i.e., the set of integer vectors $\vv$ satisfying that $\RR\vv\cap \ZZ^d=\ZZ\vv$. More generally, we will denote by $P(\Lambda)$ the set of elements $\vv$ of the lattice $\Lambda$ satisfying that $\RR\vv \cap \Lambda=\ZZ\vv$. It is well known that $P(\ZZ^d)=\SL_d(\ZZ).\ve_1$, where $\ve_1={\tp{(1, 0, \ldots, 0)}}\in \RR^d$ and $P(g\ZZ^d)=gP(\ZZ^d)$ for any $g\in \SL_d(\RR)$.

It appears from \cite{Sie98} and \cite{Rogers55} that such formulas have a profound connection to the Riemann zeta function $\zeta(d)$: 
\[\begin{gathered}
\int_{X_d} \widehat{f}(g\ZZ^d) d\mu_d(g)= \frac 1 {\zeta(d)} \int_{\RR^d} f d\vv
\quad\text{for}\;d\ge 2;\\
\int_{X_d} \widehat{f}(g\ZZ^d)^2 d\mu_d(g)=\Big(\frac 1 {\zeta(d)} \int_{\RR^d} f d\vv\Big)^2+ \frac 1 {\zeta(d)} \int_{\RR^d} f(\vv)f(\vv)+f(\vv)f(-\vv)d\vv
\quad\text{for}\;d\ge 3. 
\end{gathered}\]
See also \cite{Sch60} and \cite{FH}, respectively, for the second moment formula of the primitive Siegel transform on the real space and the $S$-arithmetic space, respectively, of dimension $2$.

For higher ranks cases, even the formula for the third moment $\int_{X_d} \widehat{f}(g\ZZ^d)^3 d\mu_d(g)$ of the primitive Siegel transform remains unknown, despite the ease of accomplishing its integrability when $d\ge 4$ from the Riesz--Kakutani--Markov representation theorem. 
Therefore, the main goal of this article is to address whether we can still achieve such applications involving higher moment formulas for Siegel transforms without explicitly describing the exact formulas. This question will be answered through following theorems.

\begin{theorem}\label{Poisson distribution} For a given $V>0$, consider a sequence $\{S_d\}_{d\in \NN}$ of measurable sets $S_d\subseteq \RR^d$ such that $S_d=-S_d$ and $\vol(S_d)=V$.
If we let
\[
\widehat{W}_d= \frac 1 2 \#\left(P(\Lambda) \cap S_d\right),
\]
where $\Lambda$ is randomly chosen in $(X_d, \mu_d)$, then $\widehat{W}_d$ converges to the Poisson distribution with mean $V/2$ in distribution.
\end{theorem}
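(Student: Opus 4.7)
The plan is to apply the method of moments. Since the Poisson distribution is uniquely determined by its moments, it suffices to show that for every integer $k \ge 1$ the $k$-th factorial moment of $\widehat{W}_d$ converges to $(V/2)^k$, the $k$-th factorial moment of Poisson$(V/2)$. Exploiting the symmetry $S_d = -S_d$, I would fix a Borel half $S_d^+$ with $S_d = S_d^+ \sqcup (-S_d^+)$; because primitive points of $\Lambda$ in $S_d$ come in antipodal pairs, $\widehat{W}_d(\Lambda) = \#(P(\Lambda)\cap S_d^+)$, and the $k$-th factorial moment unfolds into an integral over $X_d$ of the number of ordered $k$-tuples of pairwise distinct primitive points of $\Lambda$ all lying in $S_d^+$.

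Using $\SL_d(\ZZ)$-invariance, this integral should then be reorganised as a sum over $\SL_d(\ZZ)$-orbits of ordered $k$-tuples $(\vv_1,\ldots,\vv_k) \in P(\ZZ^d)^k$, in the spirit of Rogers' orbit decomposition for the ordinary Siegel transform but with primitivity constraints imposed on each entry. The ``generic'' or free orbit, consisting of tuples whose vectors extend to a basis of a primitive rank-$k$ sublattice of $\ZZ^d$, should be the dominant contributor: unfolding it yields $\vol(S_d^+)^k/\zeta(d)^k = (V/2)^k/\zeta(d)^k$, and since $\zeta(d) \to 1$ this converges to $(V/2)^k$ as $d \to \infty$. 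Every other orbit encodes a nontrivial integer linear relation among the $\vv_i$, so the corresponding integral localises onto a proper affine subvariety of $\RR^{dk}$ whose mass is bounded by a lower power of $V$ times the inverse covolume of a nontrivial integer sublattice of $\RR^d$. Since the volume of the Euclidean unit ball in $\RR^d$ decays super-exponentially while $V$ is fixed, each such non-generic contribution is $o(1)$; this mechanism is already visible in the second-moment case, where the diagonal term in $\int_{X_d}\widehat{f}(g\ZZ^d)^2\, d\mu_d$ is precisely cancelled by the subtraction in the factorial moment, leaving $(V/2)^2/\zeta(d)^2 \to (V/2)^2$.

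The main obstacle is the enumeration and uniform bounding of the non-generic orbit contributions without access to a closed-form higher-moment formula for the primitive Siegel transform, which is unknown for $k \ge 3$. One must classify $\SL_d(\ZZ)$-equivalence classes of $k \times d$ integer matrices with primitive columns of every admissible rank (via Hermite normal form or an analogous parameterisation), estimate the constraint-region volume in $(S_d^+)^k$ for each class, and verify that the total non-generic contribution vanishes in $d$. This combinatorial-analytic bookkeeping, supported by the quantitative decay $\zeta(d)-1 = O(2^{-d})$ together with the super-exponential decay of the volume of the unit ball in $\RR^d$, is what substitutes for the missing explicit formula and constitutes the technical core of the argument.
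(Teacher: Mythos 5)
Your high-level plan — method of moments, isolate the ``free'' orbit of $k$-tuples of primitive vectors spanning a primitive rank-$k$ sublattice, show it yields $(V/2)^k/\zeta(d)^k \to (V/2)^k$, and argue the remaining orbits vanish — matches the skeleton of the paper's argument (the paper works with raw moments of $\widehat{W}_d$ rather than factorial moments of $\#(P(\Lambda)\cap S_d^+)$, so it keeps the full family of matrices $\mathcal{M}^k_r$ for $1\le r\le k$ and recovers the Poisson moments through the Dobinski-type identity from \cite{Rogers56-2}; your variant eliminates the $r<k$ partition terms at the outset, which is a legitimate simplification). However, the two places you identify as the ``technical core'' are exactly where your sketch goes astray.

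First, the mechanism you cite for the decay of non-generic contributions — super-exponential decay of the Euclidean unit ball volume and ``a lower power of $V$'' — is not what makes them vanish. The paper does not reduce $V$: the error term from Propositions~\ref{Rogers 55} and~\ref{Rogers 56} is of size $5^{\lfloor k^2/4\rfloor}2^{-d}(V+1)^k + 3^{\lfloor k^2/4\rfloor}\sqrt{3/4}^{\,d}(V+1)^k$, i.e.\ the same $k$-th power of $V$, multiplied by an exponential-in-$d$ factor. That exponential factor comes from the structure of the degenerate matrices: for $q\ge 2$ one has $c_D\le q^{-d}\le 2^{-d}$, and a separate geometric argument (Rogers' Lemma~7) handles the $q=1$ matrices with a column containing two nonzero entries. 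Volume of the unit ball plays no role. Likewise $\zeta(d)-1 = O(2^{-d})$ only governs the convergence of the main term, not the error.

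Second, and more importantly, you propose to re-derive the orbit classification (``via Hermite normal form'') and estimate each orbit's contribution from scratch, which would require computing, or at least bounding, the orbit constants for the primitive Siegel transform — precisely the information the paper says is unavailable in closed form for $k\ge 3$. The paper's way around this is the observation underlying Proposition~\ref{Primitive Moment Formula}: applying the Riesz--Kakutani--Markov representation theorem to each primitive orbit gives the \emph{existence} of constants $\widehat{c}_D$ in a formula structurally identical to Rogers', and since the primitive tuple set $\widehat\Phi_D$ is a subset of the full Rogers tuple set $\Phi_D$, one gets $0\le\widehat{c}_D\le c_D$ for free. This lets one reuse Rogers' bounds verbatim for the error, while the exact value $\widehat{c}_D = 1/\zeta(d)^r$ is computed only for the main matrices $D\in\mathcal{M}^k_r$ (Proposition~\ref{Computation of main coeff}) by a Möbius/unfolding argument relating $\Phi_D$ to $\widehat\Phi_D$ via dilations by $\vm\in\NN^r$. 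Without some version of this comparison, your bookkeeping step is not just technical but genuinely underdetermined.
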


Recall that \emph{a star-shaped set} $S\subseteq \RR^d$ centered at the origin is given by
\[
S=\{\vv\in \RR^d : \vv < \rho(\vv/\|\vv\|)\}
\] 
for some continuous positive function $\rho: \SS^{d-1}\rightarrow \RR_{>0}$.
For a star-shaped set $S\subseteq \RR^d$ and a non-negative number $t\in \RR_{\ge 0}$, define the dilate of $S$ by $t$ as
\[
tS=\{\vv \in \RR^d : \vv/t\in S\},
\] 
and conventionally put $0S:=\{\origin\}$.


\begin{theorem}\label{Poisson process} Let $\{S_d\}_{d\in \NN}$ be a sequence of measurable star-shaped sets $S_d\subseteq \RR^d$ centered at the origin with $S_d=-S_d$ and $\vol(S_d)=1$.
For $t\in \RR_{\ge 0}$, define
\[
\widehat{W}_d(t)=\frac 1 2 \#\left(P(\Lambda) \cap t^{1/d} S_d\right),
\]
where $\Lambda$ is randomly chosen is $(X_d, \mu_d)$.
Then the stochastic process $\{\widehat{W}_d(t):t\in \RR_{\ge 0}\}$ converges weakly to the Poisson point process on $\RR_{\ge 0}$ with intensity $1/2$. 
\end{theorem}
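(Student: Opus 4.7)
The plan is to reduce the weak convergence of $\{\widehat W_d(t)\}_{t\ge 0}$ to a Poisson point process on $\RR_{\ge 0}$ with intensity $1/2$ to two statements about symmetric annular regions, both of which follow directly from Theorem \ref{Poisson distribution}. For $0 \le a < b$, set $B_{a,b}^{(d)} := b^{1/d} S_d \setminus a^{1/d} S_d$. Because $S_d$ is star-shaped about $\origin$ and satisfies $S_d = -S_d$, each $B_{a,b}^{(d)}$ is a symmetric Borel set with $\vol(B_{a,b}^{(d)}) = b - a$, and
\[
\widehat W_d(b) - \widehat W_d(a) = \tfrac 1 2 \#\bigl(P(\Lambda) \cap B_{a,b}^{(d)}\bigr).
\]
Hence Theorem \ref{Poisson distribution}, applied with $V = b - a$ and with $B_{a,b}^{(d)}$ in place of $S_d$, gives convergence of this increment in distribution to a Poisson variable of mean $(b-a)/2$.

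For joint control across disjoint intervals, I would observe that if $[a_1,b_1], \ldots, [a_k,b_k] \subset \RR_{\ge 0}$ are pairwise disjoint, then the regions $B_{a_j,b_j}^{(d)}$ are pairwise disjoint (since $c S_d \subseteq c' S_d$ whenever $c \le c'$), so their union is again a symmetric Borel set of volume $V := \sum_j (b_j - a_j)$. Theorem \ref{Poisson distribution} applied to this union shows that the total count of primitive lattice points in $\bigcup_j B_{a_j,b_j}^{(d)}$, divided by $2$, converges to a Poisson variable of mean $V/2$. In particular, the joint emptiness probability satisfies
\[
\PP\Bigl(\widehat W_d(b_j) - \widehat W_d(a_j) = 0\text{ for all }j\Bigr) \longrightarrow \exp\Bigl(-\tfrac 1 2 \textstyle\sum_j (b_j - a_j)\Bigr).
\]
Combined with $\EE[\widehat W_d(b) - \widehat W_d(a)] = (b-a)/(2\zeta(d)) \to (b-a)/2$ (from Siegel's formula for the primitive Siegel transform together with $\zeta(d)\to 1$), Kallenberg's classical criterion for weak convergence of simple point processes on $\RR_{\ge 0}$ shows that the point process whose counting function is $\widehat W_d$ converges weakly to the Poisson point process of intensity $1/2$; equivalently, $\{\widehat W_d(t):t\in \RR_{\ge 0}\}$ converges weakly in the Skorokhod topology.

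The main point I expect to have to verify is that Theorem \ref{Poisson distribution} genuinely applies to the sets $B_{a,b}^{(d)}$ and to their finite disjoint unions, which are symmetric measurable sets of the prescribed volume but are in general neither connected nor star-shaped. Since Theorem \ref{Poisson distribution} only requires symmetry and a fixed volume, no additional geometric hypothesis should be needed, but one should confirm that the proof does not implicitly invoke anything further. If it does, the remedy is to re-run the moment-based argument underlying Theorem \ref{Poisson distribution} directly on the indicator functions of the annular unions above; being bounded, compactly supported, and symmetric, these introduce no qualitatively new features into the higher-moment estimates for the primitive Siegel transform.
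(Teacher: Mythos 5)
Your argument is correct, but it closes the proof by a genuinely different route than the paper. The paper proves Theorem~\ref{Poisson process} directly by the multi-dimensional method of moments: for any $0\le t_1<\cdots<t_\ell$ and $\vk\in\NN^\ell$ it computes
\[
\lim_{d\to\infty}\EE\Big(\prod_{j=1}^\ell \widehat W_d(t_j)^{k_j}\Big)
=\sum_{P\in\mathcal P^k}2^{-\#P}\prod_{B\in P}t_B
\]
via Propositions~\ref{Rogers 55}, \ref{Rogers 56}, \ref{correspondonce with partitions} and \ref{Computation of main coeff}, matches this with the joint moments of the Poisson point process, and then invokes \cite[Theorem 12.6]{Billingsley} to upgrade finite-dimensional convergence to weak convergence in the Skorokhod space. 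Theorem~\ref{Poisson distribution} is then deduced afterwards as the $\ell=1$ case. You invert this order: you apply the $\ell=1$ statement to shells $b^{1/d}S_d\setminus a^{1/d}S_d$ and their finite disjoint unions to obtain convergence of avoidance probabilities, pair this with first-moment convergence from Siegel's formula, and close via Kallenberg's avoidance-function criterion for simple point processes. This is a cleaner, more modular route and avoids computing joint moments of the Poisson process; the price is that you must (a) note that Theorem~\ref{Poisson distribution} is, in the paper, established only as a corollary of the proof of Theorem~\ref{Poisson process}, so you are implicitly re-running the one-set moment calculation as a stand-alone first step (which is indeed routine, since Theorem~\ref{Poisson distribution} imposes only symmetry and fixed volume, as you correctly observe, so the annular unions are legitimate inputs), and (b) cite the precise version of Kallenberg's theorem you use: the limit Poisson process is simple because its intensity measure $\tfrac12\,\mathrm{Leb}$ is atomless, the approximating processes need not be simple, and you should take the convergence-determining class to be finite unions of bounded half-open intervals, verifying avoidance-probability convergence and $\EE[\widehat W_d(U)]\to\tfrac12\vol(U)$ there. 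With those two points made explicit, your proof is complete.
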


We remark that the result of Theorem~\ref{Poisson process} also holds for the following setting without any further modification of the proof of the theorem: For each $d\in \NN$, consider the increasing family $\mathcal S_d=\{S^{(d)}_t\subseteq \RR^d: t\in \RR_{\ge 0}\}$ of measurable sets such that $\vol(S^{(d)}_t)=t$ and $S^{(d)}_t=-S^{(d)}_t$, and define
\[
W'_d(t)=\frac 1 2 \#\left(P(\Lambda) \cap S^{(d)}_t \right).
\]
It follows that $\{W'_d(t):t\in \RR_{\ge 0}\}$ converges weakly to the Poisson point process on $\RR_{\ge 0}$ with intensity $1/2$.

\vspace{0.1in}
Let $\phi:\NN\rightarrow \RR_{>0}$ be a function for which
\begin{equation}\label{condition for phi}
\lim_{d\rightarrow \infty} \phi(d)=\infty
\quad\text{and}\quad
\phi(d)=O_{\varepsilon}(e^{\varepsilon d}),\;\forall \varepsilon>0.
\end{equation}

\begin{theorem}\label{Normal distribution}
For each $d\in \NN$, let $S_d\subseteq \RR^d$ be a measurable set such that $S_d=-S_d$ and $\vol(S_d)=\phi(d)$. Define
\[
\widehat Z_d=\frac {\# (P(\Lambda) \cap S_d) - \phi(d)/\zeta(d)} {\sqrt{2\phi(d)/\zeta(d)}},
\]
where $\Lambda$ is randomly chosen in $(X_d, \mu_d)$. It holds that
\[
\widehat Z_d\rightarrow \mathcal N(0,1)\;\text{as}\; d\rightarrow \infty
\]
in distribution. Here, $\mathcal N(0,1)$ is the standard normal distribution.
\end{theorem}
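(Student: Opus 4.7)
My approach is to reduce Theorem~\ref{Normal distribution} to the corresponding central limit theorem for the ordinary Siegel transform $\widetilde f_d$ established by S\"odergren and Str\"ombergsson \cite{StSo2019}, using M\"obius inversion on $\NN$. Writing every nonzero $\vv\in \Lambda$ uniquely as $\vv=m\vw$ with $m\in\NN$ and $\vw\in P(\Lambda)$, and defining $f_d^{(m)}(\vv):=f_d(m\vv)=\mathbf 1_{m^{-1}S_d}(\vv)$, one obtains the identity $\widetilde{f_d}=\sum_{m\ge 1}\widehat{f_d^{(m)}}$, and hence by standard M\"obius inversion
\[
\widehat f_d(\Lambda) \;=\; \sum_{m\ge 1} \mu(m)\, \widetilde{f_d^{(m)}}(\Lambda).
\]

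Subtracting means and isolating the $m=1$ term gives
\[
\widehat f_d-\EE \widehat f_d \;=\; \big(\widetilde f_d-\EE \widetilde f_d\big) \;+\; \sum_{m\ge 2}\mu(m)\big(\widetilde{f_d^{(m)}}-\EE \widetilde{f_d^{(m)}}\big).
\]
Rogers' second moment formula applied to the symmetric set $m^{-1}S_d$ of volume $m^{-d}\phi(d)$ gives $\mathrm{Var}(\widetilde{f_d^{(m)}})=2m^{-d}\phi(d)$ for all $d\ge 3$. Minkowski's inequality in $L^2(X_d,\mu_d)$ then yields
\[
\Big\|\sum_{m\ge 2}\mu(m)\big(\widetilde{f_d^{(m)}}-\EE \widetilde{f_d^{(m)}}\big)\Big\|_{L^2} \;\le\; \sqrt{2\phi(d)}\sum_{m\ge 2}m^{-d/2} \;=\; O\!\big(\sqrt{\phi(d)}\cdot 2^{-d/2}\big),
\]
which under the subexponential growth assumption $\phi(d)=O_\varepsilon(e^{\varepsilon d})$ is $o\bigl(\sqrt{\phi(d)/\zeta(d)}\bigr)$ as $d\to \infty$. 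Hence, after dividing by the normalization $\sqrt{2\phi(d)/\zeta(d)}$, the M\"obius tail vanishes in $L^2(X_d,\mu_d)$ and therefore in probability.

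The main term $(\widetilde f_d-\EE\widetilde f_d)/\sqrt{2\phi(d)}$ converges in distribution to $\mathcal N(0,1)$ by the CLT of \cite{StSo2019}, and since $\zeta(d)\to 1$ the two normalizations $\sqrt{2\phi(d)}$ and $\sqrt{2\phi(d)/\zeta(d)}$ are asymptotically equivalent. Combined with the $L^2$ bound above, Slutsky's theorem then yields $\widehat Z_d\to \mathcal N(0,1)$.

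\textbf{Main obstacle.} If one wishes to avoid any direct invocation of \cite{StSo2019} and to instead carry out the moment method intrinsically for $\widehat f_d$---which aligns with the paper's stated goal of deriving applications of higher moment formulas for the primitive Siegel transform without explicit descriptions---the difficulty shifts to establishing dimension-asymptotic Rogers-type bounds for all moments $\EE \widehat f_d^k$ and isolating the Gaussian-pairing contribution by hand. Via the M\"obius inversion above this reduces to analyzing a $k$-fold alternating sum of moments of $\widetilde{f_d^{(m_1)}}\cdots\widetilde{f_d^{(m_k)}}$, so the combinatorial bookkeeping multiplies; controlling cross-terms indexed by tuples with some $m_j\ge 2$ uniformly in $d$ is the delicate step, and the subexponential bound on $\phi$ is essential to absorb the combinatorial growth while still letting the $m^{-d/2}$-type factors do their work.
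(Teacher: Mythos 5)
Your proof is correct, but it takes a genuinely different route from the paper. You reduce to the central limit theorem for the \emph{ordinary} Siegel transform established in \cite{StSo2019}, via the M\"obius-inversion identity
\[
\widehat f_d \;=\; \sum_{m\ge 1}\mu(m)\,\widetilde{f_d^{(m)}},\qquad f_d^{(m)}=\mathbf 1_{m^{-1}S_d},
\]
and then kill the $m\ge 2$ tail in $L^2$. The paper instead works intrinsically with the \emph{primitive} Siegel transform: it first proves the incomplete higher moment formula (Proposition~\ref{Primitive Moment Formula}), computes the main coefficients $\widehat c_D=\zeta(d)^{-r}$ for $D\in\mathcal M^k_r$ (Proposition~\ref{Computation of main coeff}), and then derives a ``centered'' moment formula (Theorem~\ref{New Moment Formula}) in which only matrices with at least two nonzero entries per row survive; Theorem~\ref{Normal distribution} then drops out of Proposition~\ref{Moment for Brownian motion} with $\ell=1$ by the method of moments. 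Your approach is a slick shortcut that sidesteps all of that machinery and is perfectly valid for Theorem~\ref{Normal distribution} itself; the trade-off is that the paper's formulas (in particular Theorem~\ref{New Moment Formula} and Proposition~\ref{Moment for Brownian motion}) are reused directly for the process-level Theorem~\ref{Brownian motion}, including the moment bound needed for tightness, whereas a M\"obius-inversion reduction would have to be re-examined there with some care.

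One small imprecision worth flagging: $\mathrm{Var}\bigl(\widetilde{f_d^{(m)}}\bigr)$ is \emph{not} exactly $2m^{-d}\phi(d)$; Rogers' second moment formula has additional contributions from $q\ge 2$ and from entries $|D_{ij}|\ge 2$. What you actually need, and what does hold, is a uniform bound $\mathrm{Var}\bigl(\widetilde{\mathbf 1_A}\bigr)\le C\,\vol(A)$ for a universal constant $C$ and all $d\ge 3$ (e.g.\ $C=2\tfrac{d}{d-1}\zeta(d-1)\le \pi^2/2$), valid for every symmetric Borel set $A$. With that inequality in place your Minkowski bound
\[
\Bigl\|\sum_{m\ge 2}\mu(m)\bigl(\widetilde{f_d^{(m)}}-\EE\widetilde{f_d^{(m)}}\bigr)\Bigr\|_{L^2}
\;\le\; \sqrt{C\phi(d)}\sum_{m\ge 2}m^{-d/2}\;=\;O\bigl(\sqrt{\phi(d)}\,2^{-d/2}\bigr)
\]
goes through unchanged, and the rest of the Slutsky argument is fine. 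Incidentally, the subexponential bound on $\phi$ plays no role in making this tail negligible (the factor $2^{-d/2}$ alone suffices); it is needed only to invoke the CLT of \cite{StSo2019} for the main term.
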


\begin{theorem}\label{Brownian motion}
For each $d\in \NN$, let $S_d\subseteq \RR^d$ be a star-shaped set such that $S_d=-S_d$ and $\vol(S_d)=\phi(d)$.
For $t\in [0,1]$, define
\[
\widehat Z_d(t)=\frac {\#\left(P(\Lambda) \cap t^{1/d}S_d\right)- t\phi(d)/\zeta(d)}{\sqrt{2\phi(d)/\zeta(d)}},
\]
where $\Lambda$ is randomly chosen in $(X_d, \mu_d)$. Then $\widehat Z_d(t)$ converges to one-dimensional Brownian motion in distribution as $d$ goes to infinity.
\end{theorem}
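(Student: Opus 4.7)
The plan is to follow the standard two-step route: convergence of finite-dimensional distributions together with tightness of the sequence $\{\widehat Z_d\}$ in the appropriate path space. Both steps rely on the higher moment estimates for the primitive Siegel transform underlying the proof of Theorem~\ref{Normal distribution}.

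For the finite-dimensional distributions, fix $0 = t_0 < t_1 < \cdots < t_k \le 1$. Because $S_d$ is star-shaped and symmetric, the annuli
\[
A_j^{(d)} := t_j^{1/d}S_d \setminus t_{j-1}^{1/d}S_d, \qquad j = 1, \ldots, k,
\]
are disjoint, symmetric, and satisfy $\vol(A_j^{(d)}) = (t_j - t_{j-1})\phi(d)$, and the increments of $\widehat Z_d$ may be written as
\[
\widehat Z_d(t_j) - \widehat Z_d(t_{j-1}) = \frac{\widehat{\mathbf{1}}_{A_j^{(d)}}(\Lambda) - (t_j - t_{j-1})\phi(d)/\zeta(d)}{\sqrt{2\phi(d)/\zeta(d)}}.
\]
By the Cram\'er--Wold device, joint convergence of these increments to independent Gaussians $\mathcal N(0, t_j - t_{j-1})$ reduces to the one-dimensional convergence, for every $(a_1,\ldots,a_k)\in \RR^k$, of $\sum_j a_j (\widehat Z_d(t_j) - \widehat Z_d(t_{j-1}))$ to $\mathcal N\bigl(0, \sum_j a_j^2 (t_j - t_{j-1})\bigr)$. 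This linear combination equals the normalized, centered primitive Siegel transform of the even simple function $g_d := \sum_j a_j \mathbf{1}_{A_j^{(d)}}$; since $\int g_d^2 = \sum_j a_j^2 (t_j - t_{j-1})\phi(d)$, the second moment formula recalled in the introduction already delivers the correct limiting variance. I would then extend the method-of-moments argument behind Theorem~\ref{Normal distribution} from indicators of symmetric sets to even simple functions of this form, which amounts to tracking the same combinatorial expansions applied to a multi-valued function with the same symmetry.

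For tightness I would invoke a Kolmogorov--Chentsov criterion by establishing
\[
\EE\bigl[(\widehat Z_d(t) - \widehat Z_d(s))^{4}\bigr] \le C (t-s)^{2}
\]
uniformly in $d\in\NN$ and $0 \le s \le t \le 1$. The increment $\widehat Z_d(t) - \widehat Z_d(s)$ is the normalized, centered primitive Siegel transform of $\mathbf{1}_{A^{(d)}}$ for $A^{(d)} := t^{1/d}S_d \setminus s^{1/d}S_d$, a symmetric set of volume $(t-s)\phi(d)$; the fourth-order bounds produced in the proof of Theorem~\ref{Normal distribution}, applied with $\phi(d)$ replaced by $(t-s)\phi(d)$, should yield exactly such an estimate. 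Combined with the fidis convergence, tightness gives weak convergence of $\widehat Z_d$ in $D([0,1])$ to the unique continuous process with independent Gaussian increments of variance $t-s$, i.e., standard Brownian motion on $[0,1]$.

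The main obstacle is the tightness step. Since explicit closed-form formulas for the fourth and higher moments of the primitive Siegel transform are not available in large dimension, the bound above cannot simply be read off. Instead one has to propagate the asymptotic moment bounds used in the proof of Theorem~\ref{Normal distribution} and verify that the implied constants are uniform in $d$ and in the entire family of symmetric annuli $\{A^{(d)}\}_{s,t,d}$; without this uniformity, the right-hand side would not scale like $(t-s)^{2}$ and the Kolmogorov criterion would fail.
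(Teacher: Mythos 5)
Your treatment of the finite-dimensional distributions is essentially the paper's: the paper proves Proposition~\ref{Moment for Brownian motion} for the disjoint sets $S_{d,j}$, which when applied to the annuli $A_j^{(d)}$ gives exactly the joint-moment factorization into products of Gaussian moments; your Cram\'er--Wold reformulation via even simple functions is an equivalent packaging of the same combinatorial content.

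The tightness step, however, contains a genuine gap: the bound $\EE\bigl[(\widehat Z_d(t)-\widehat Z_d(s))^4\bigr]\le C(t-s)^2$ is \emph{false} uniformly in $d$, $s$, $t$. The increment $\widehat Z_d(t)-\widehat Z_d(s)$ is the normalized, centered primitive Siegel transform of the indicator of a set of volume $(t-s)\phi(d)$, and its fourth moment contains, besides the $r=2$ ``Gaussian'' terms of size $\asymp (t-s)^2$, an $r=1$ term of size $\asymp (t-s)/\phi(d)$ (this is the diagonal or ``Poisson'' contribution, visible already from the second moment formula for $\widehat{f}^2$ quoted in the introduction, or by approximating the count by a Poisson random variable of mean $(t-s)\phi(d)/2\zeta(d)$). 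When $t-s\lesssim 1/\phi(d)$, this extra term dominates $(t-s)^2$, so no Kolmogorov--Chentsov estimate of the type you propose can hold, no matter how carefully one propagates constants. This is precisely why the paper, following Str\"ombergsson--S\"odergren, proves tightness via the Billingsley/Chentsov criterion
\[
\EE \left((\widehat{Z}_d(s)-\widehat{Z}_d(r))^2(\widehat{Z}_d(t)-\widehat{Z}_d(s))^2\right)
\ll \left(\sqrt{t} - \sqrt{r}\right)^2, \qquad 0\le r\le s\le t\le 1,
\]
which controls the product of squares of two adjacent increments: when the two annuli are small enough to trigger the Poisson contribution, only one jump can fall in them and at least one factor vanishes, killing the bad term. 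This criterion is also the right one for the c\`adl\`ag path space $D[0,1]$, whereas your approach implicitly treats $\widehat Z_d$ as if it were continuous. To repair your argument you would need to replace the $\EE[\Delta^4]$ bound by the split-increment bound and verify, as the paper does, that the relevant set of $4\times 4$ matrices with $r=1$ is empty, $\bigcup_{u} N(\widehat{\mathcal D}^4_{1,u})\cap(\mathcal R^4_1\cup\mathcal R^4_2)=\emptyset$, so that the dangerous diagonal term does not appear in the cross-moment.
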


\subsection*{Organization}
In Section~\ref{Section: Preliminaries}, we briefly review Rogers' higher moment formulas for the Siegel transform as defined in \eqref{R Siegel}, along with the necessary properties for the remainder of this article.
In Section~\ref{Section: Convergence to Poisson Distribution}, to prove Theorem~\ref{Poisson distribution} and Theorem~\ref{Poisson process}, we demonstrate that the matrices contributing to the \emph{main term} also appear in the integral formula for the primitive Siegel transform, while the sum of integrals related to the rest of matrices vanishes as $d$ goes to infinity.
For Theorem~\ref{Normal distribution} and Theorem~\ref{Brownian motion}, we further conduct an analysis of the integral formulas for functions defined by the primitive Siegel transform, normalized by their means. For this, in Section~\ref{New Moment Formulas}, we derive \emph{new moment formulas} for these normalized functions, presenting an analogue to \cite[Theorem 2.3]{StSo2019}.

\subsection*{Acknowledgement}
I would like to thank Anish Ghosh for his encouragement and valuable discussions. I am also grateful to Evan O'Dorney for identifying numerous typos and informalities. This project is  supported by a KIAS Individual Grant MG088401 at Korea Institute for Advanced Study.

\section{Preliminaries}\label{Section: Preliminaries}
For a bounded and compactly supported function $F:(\RR^d)^k\rightarrow \RR$, define
\begin{equation}
\Siegel{k}{F}(g\ZZ^d)
=\sum_{\vv_1, \ldots, \vv_k\in \ZZ^d-\{0\}} F(g\vv_1, \ldots, g\vv_k),\quad \forall g\in \SL_d(\RR).
\end{equation}

The following theorem was firstly introduced by Rogers \cite{Rogers55}, and proved by Schmidt \cite{Schmidt57}. The theorem was generalized to the S-arithmetic case by the author \cite{Han22} with a different argument from \cite{Schmidt57}, even in the real case. Let us follow notations in \cite{Han22}.
\begin{theorem}\label{Rogers higher moment formula}
For each $1\le k\le d-1$, $q\in \NN$ and $1\le r \le k$, define $\mathcal D^k_{r,q}$ be the set of $r\times k$ matrices $D$ with integral coefficients for which there are $J_D:=\{1=j_1<j_2<\cdots<j_r\le k\}$ such that
\begin{enumerate}
\item each $[D]^j$ is nonzero;
\item $\left([D]^{j_1}, [D]^{j_2}, \ldots, [D]^{j_r}\right)=qI_r$;
\item $D_{ij}=0$ for $1\le i\le r$ and $1\le j<j_i$;
\item $\gcd(D_{ij})=1$,
\end{enumerate}
where $[D]^j$ and $D_{ij}$, respectively, are the $j$-th column and the $(i,j)$-entry, respectively, of $D$.
Then
\[
\int_{X_d} \Siegel{k}{F}(g\ZZ^d)^k d\mu_d(g)
=\sum_{r=1}^k \sum_{q\in \NN} \sum_{D\in \mathcal D^{k}_{r,q}}
c_D \int_{(\RR^d)^r} F \left(\frac 1 q (\vv_1, \ldots, \vv_r)D\right) d\vv_1 \cdots d\vv_r,
\]
where
\begin{equation}\label{regular coeff}
c_D=\frac{\#\left\{\vx\in \{0,1,\ldots, q-1\}^r :\vx D/q\in \ZZ^k \right\}^d} {q^{dr}}.
\end{equation}
\end{theorem}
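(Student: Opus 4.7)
The plan is to partition the set of $k$-tuples $(\vm_1, \ldots, \vm_k) \in (\ZZ^d \setminus \{\origin\})^k$ into $\SL_d(\ZZ)$-orbits under the diagonal action, and then apply a Siegel-type unfolding to each orbit. Given such a tuple $\vm = (\vm_1, \ldots, \vm_k)$, let $r$ be the $\ZZ$-rank of the sublattice $\Lambda_{\vm} := \ZZ\vm_1 + \cdots + \ZZ\vm_k$, and let $\widetilde\Lambda_{\vm}$ be its primitive closure in $\ZZ^d$; the quotient $\widetilde\Lambda_{\vm}/\Lambda_{\vm}$ is a finite abelian group whose exponent governs the parameter $q$.

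The first and most delicate step is to show that each orbit has a unique canonical representative of the form $\vm = \frac{1}{q} (\vv_1, \ldots, \vv_r) D$, where $(\vv_1, \ldots, \vv_r)$ is a basis of $\widetilde\Lambda_{\vm}$ chosen by a fixed convention (essentially Hermite-type reduction along the first-appearance indices $J_D = \{j_1 < \cdots < j_r\}$, with $j_i$ the smallest $j$ such that $\vm_j \notin \spn_{\RR}(\vm_1, \ldots, \vm_{j-1})$), and $D \in \mathcal D^k_{r,q}$. The four conditions defining $\mathcal D^k_{r,q}$ emerge naturally from this reduction: (2) fixes the pivots at $q I_r$, (3) is the echelon-shape condition, and (4) forces $q$ to be minimal so that no common factor can be cancelled. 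This sets up a bijection between $\SL_d(\ZZ)$-orbits of nonzero $k$-tuples and triples $(r, q, D)$ with $D \in \mathcal D^k_{r,q}$.

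I would then regroup the sum defining $\Siegel{k}{F}$ according to this parametrization and apply Siegel's mean value theorem for primitive $r$-tuples in a random unimodular lattice (valid for $1 \le r \le d - 1$): for a suitable $G : (\RR^d)^r \to \RR$,
\[
\int_{X_d} \sum_{(\vv_1, \ldots, \vv_r)} G(g\vv_1, \ldots, g\vv_r) \, d\mu_d(g) = \int_{(\RR^d)^r} G(\vv_1, \ldots, \vv_r) \, d\vv_1 \cdots d\vv_r,
\]
where the inner sum runs over ordered bases of primitive rank-$r$ sublattices of $\ZZ^d$. Taking $G(\vv_1, \ldots, \vv_r) = F\bigl(\tfrac{1}{q} (\vv_1, \ldots, \vv_r) D\bigr)$ and summing over all admissible $(r, q, D)$ should then recover the right-hand side of the theorem, modulo the combinatorial factor $c_D$.

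The coefficient $c_D$ of \eqref{regular coeff} corrects for the mismatch between Siegel's integration domain $(\RR^d)^r$ and the requirement that $(\vv_1, \ldots, \vv_r) D / q$ have integer columns so as to correspond to a true integer $k$-tuple. A local count modulo $q$, carried out independently in each of the $d$ coordinates, yields the factor $\#\{\vx \in (\ZZ/q\ZZ)^r : \vx D / q \in \ZZ^k\}^d / q^{dr}$, the exponent $d$ reflecting coordinate-wise independence. The main obstacle will be the orbit-classification step: verifying that distinct triples $(r, q, D)$ yield disjoint orbits and that every tuple arises from some such canonical form. This requires careful bookkeeping via Hermite/Smith-normal-form reductions together with the $\SL_d(\ZZ)$-equivariance of the primitive-closure operation, which is ultimately what reduces the problem to the primitive-$r$-tuple Siegel formula.
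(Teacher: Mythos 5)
The paper does not reprove this theorem (it cites Rogers, Schmidt, and \cite{Han22}), but the sketch in Proposition~\ref{Primitive Moment Formula} and the argument in \cite{Han22} follow exactly the orbit-decomposition strategy you outline. Your high-level plan is right, but several of the details you commit to are incorrect, and they matter.

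First, the ``Siegel mean value theorem'' you invoke with constant $1$ on the right-hand side is the formula for the sum over \emph{all} linearly independent $r$-tuples $(\vw_1,\ldots,\vw_r)\in(\ZZ^d)^r$, not over ordered bases of primitive rank-$r$ sublattices. The primitive version carries an extra factor $\zeta(d)^{-1}\zeta(d-1)^{-1}\cdots\zeta(d-r+1)^{-1}$ and would \emph{not} reproduce $c_D=1$ in the main case $D=\Id_k$, $q=1$, $r=k$. So the inner sum must be taken over all linearly independent $r$-tuples subject to the congruence constraint, not over primitive ones.

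Second, your description of the canonical representative conflicts with the definition of $\mathcal D^k_{r,q}$. Condition (2) forces $\vm_{j_i}=\frac{1}{q}(\vv_1,\ldots,\vv_r)[D]^{j_i}=\vv_i$, so the $\vv_i$ are precisely the pivot vectors $\vm_{j_i}$ (the first vectors escaping the span of the preceding ones). These vectors are generally \emph{not} primitive and do \emph{not} form a basis of the primitive closure $\widetilde\Lambda_{\vm}$; e.g.\ for $k=1$, $\vm_1=2\ve_1$, one has $\vv_1=2\ve_1$, $q=1$, $D=(1)$, while $\widetilde\Lambda_{\vm}=\ZZ\ve_1$. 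Relatedly, $q$ is not the exponent of $\widetilde\Lambda_{\vm}/\Lambda_{\vm}$: for $\vm_1=2\ve_1$, $\vm_2=\ve_1$ one gets $q=2$ yet $\widetilde\Lambda_{\vm}=\Lambda_{\vm}$. The correct description of $q$ is the least common denominator of the rational coefficients expressing each $\vm_j$ in the basis $(\vm_{j_1},\ldots,\vm_{j_r})$; condition (4) pins it down.

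With these corrections, the rest falls into place exactly as you envisioned: decompose $(\ZZ^d\setminus\{\origin\})^k$ as the disjoint union over $(r,q,D)$ of $\frac1q\Phi_D D$, where $\Phi_D=\{(\vw_1,\ldots,\vw_r)\in(\ZZ^d)^r:\ \text{lin.\ indep.,}\ WD\equiv 0\ (\mathrm{mod}\ q)\}$; this set is $\SL_d(\ZZ)$-invariant, so the pushforward measure is $\SL_d(\RR)$-invariant on $(\RR^d)^r$ and hence a multiple of Lebesgue for $r\le d-1$; and the constant $c_D$ is exactly the density of $\Phi_D$ in $\Mat_{d\times r}(\ZZ)$, which factors over the $d$ rows as you observe, giving $c_D=\bigl(\#\{\vx\in(\ZZ/q\ZZ)^r:\vx D\equiv 0\ (\mathrm{mod}\ q)\}/q^r\bigr)^d$. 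You should make the description of $\Phi_D$ explicit (rather than ``primitive tuples''), since that is precisely where the congruence condition enters and where the factor $c_D$ comes from; as written the argument invokes the wrong Siegel formula and then patches it with $c_D$ post hoc without a consistent derivation.
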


Using the fact that $c_D\le 1/q^d$ for any $D\in \mathcal D^k_{r,q}$, Rogers showed the following proposition.

\begin{proposition}[{\cite[Section 9]{Rogers55-2}}]\label{Rogers 55} 
Define
\begin{equation}\label{R1}
\mathcal R^k_1
=\Bigg\{D\in \bigcup_{\scriptsize \begin{array}{c}
k,q\in \NN\\
1\le r \le k\end{array}} \mathcal D^k_{r,q} :
\begin{array}{l}
\circ \; q \ge 2 \quad\text{or}\\
\circ \; q=1 \;\text{and}\; |D_{ij}|\ge 2\;\text{for some}\; D_{ij} 
\end{array}\Bigg\}.
\end{equation}
Assume that $d$ and $k\in \NN$ satisfy the condition that $d\ge \lfloor k^2/4 \rfloor +3$, where $\lfloor x \rfloor$ is the largest integer less than or equal to $x$.
Let $F=\prod_{j=1}^k I_{A_j}$ be the product of indicator functions of $A_j$, where each $A_j \subseteq \RR^d$ is a Borel set contained in some Borel set $B\subseteq \RR^d$ with $\vol(B)=V$.
Then
\[
\sum_{r=1}^{k-1}\sum_{q\in \NN}\sum_{D\in \mathcal D^k_{r,q}\cap \mathcal R^k_1} c_D\int_{(\RR^d)^r} F\left(\frac 1 q (\vv_1, \ldots, \vv_r)D\right) d\vv_1 \cdots d\vv_r
< 21\cdot 5^{\lfloor k^2/4 \rfloor} 2^{-d} (V+1)^k.
\]
where $c_D$ is a positive constant as in \eqref{regular coeff}.
\end{proposition}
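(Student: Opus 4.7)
The plan is to follow Rogers' original combinatorial and geometric analysis. The two main ingredients we exploit are the uniform coefficient bound $c_D \leq 1/q^d$ (stated above as the starting point of \cite{Rogers55-2}) and an upper estimate, which we derive next, for the integral
\[
I(D) := \int_{(\RR^d)^r} F\Big(\tfrac{1}{q}(\vv_1, \ldots, \vv_r) D\Big)\, d\vv_1 \cdots d\vv_r.
\]
Since the pivot columns satisfy $[D]^{j_i} = q e_i$, the substitution $\vw_{j_i} = \vv_i$ is measure-preserving and identifies $I(D)$ with the volume of a subset of $A_{j_1} \times \cdots \times A_{j_r} \subseteq B^r$, giving the crude bound $I(D) \leq V^r$. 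The key refinement is that if some off-pivot entry satisfies $|D_{ij}| \geq 2$ with $j \notin J_D$, then integrating $\vv_i$ last and using $\vw_j = \tfrac{1}{q}\sum_l D_{lj}\vv_l \in B$ confines $\vv_i$ to a set of volume at most $(q/|D_{ij}|)^d V$, yielding $I(D) \leq V^r (q/|D_{ij}|)^d$.

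With these two estimates in hand, the sum over $\mathcal{R}^k_1$ splits into two matching cases. If $q \geq 2$, then $c_D \leq q^{-d}$ combined with the trivial $I(D) \leq V^r$ and summation over $q \geq 2$ produces a $\sum_{q \geq 2} q^{-d}$ factor bounded by $2 \cdot 2^{-d}$. If $q = 1$ and some $|D_{ij}| \geq 2$, then $c_D = 1$, but the refined integral bound yields $I(D) \leq V^r \cdot 2^{-d}$ directly. Each case thus contributes the crucial factor $2^{-d}$. It then remains to sum over admissible matrices: for fixed $r$ and $q$, stratifying by $N = \max_{i,\, j \notin J_D} |D_{ij}|$, the number of $D \in \mathcal{D}^k_{r,q}$ with this norm is $O(N^{r(k-r)})$ since $D$ has at most $r(k-r)$ free off-pivot entries, and iterating the refined integral estimate across an independent set of off-pivot columns supplies a further geometric-series factor producing the power $(V+1)^k$.

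The main technical obstacle, and the source of the hypothesis $d \geq \lfloor k^2/4 \rfloor + 3$, is the convergence of the outer sum $\sum_N N^{r(k-r)} N^{-d}$. Convergence requires $d > r(k-r)$, and since $r(k-r)$ is maximized at $r = \lfloor k/2 \rfloor$ with value $\lfloor k^2/4 \rfloor$, the hypothesis leaves at least two extra powers of $1/2$ to absorb universal constants. The delicate part is keeping the estimates tight across both alternatives defining $\mathcal{R}^k_1$, across all $1 \leq r \leq k-1$, and across the choice of off-pivot column used in each ``solve for $\vv_i$'' step (distinct solves must use distinct rows), and this bookkeeping is what produces the explicit constant $21 \cdot 5^{\lfloor k^2/4 \rfloor}$.
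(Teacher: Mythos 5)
The paper does not give a proof of this proposition; it cites \cite[Section 9]{Rogers55-2} directly, noting only the input $c_D \le q^{-d}$. Your sketch is therefore being measured against Rogers' original argument. You do identify the correct mechanisms: the uniform coefficient bound $c_D\le q^{-d}$ and the ``solve for $\vv_i$ via a non-pivot column'' refinement $I(D)\le V^r(q/|D_{ij}|)^d$, together with a stratification by the maximal off-pivot entry $N$. This is the right skeleton.

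That said, the execution has gaps. The main one is structural: the branch ``$q\ge 2$: combine $c_D\le q^{-d}$ with the crude $I(D)\le V^r$ and sum over $q\ge 2$'' is not by itself a valid contribution bound, because for each fixed $q\ge 2$ the set $\mathcal D^k_{r,q}$ is already infinite (the off-pivot entries range over all integers), so the crude integral bound produces a divergent sum over $D$ before $\sum_{q\ge 2}q^{-d}$ can help. The refined estimate and the $N$-stratification are indispensable in \emph{both} branches; your closing paragraph reaches for them but presents them as post-hoc bookkeeping rather than as the essential content of the $q\ge 2$ case. Second, the convergence threshold is off by one: with $O(N^{r(k-r)})$ matrices of norm at most $N$ and weight $N^{-d}$ per matrix, $\sum_N N^{r(k-r)-d}$ converges precisely when $d>r(k-r)+1$, not when $d>r(k-r)$; the hypothesis $d\ge\lfloor k^2/4\rfloor+3$ then leaves one spare power of $1/2$, not two. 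Third, the explanation of $(V+1)^k$ as a ``geometric-series factor'' obtained by ``iterating the refined estimate across an independent set of off-pivot columns'' does not hold up: the non-pivot columns impose shrinking constraints, not additional volume factors, and $(V+1)^k$ is simply a uniform majorant for $V^r$ over $1\le r\le k-1$ (and gracefully handles small $V$). These points would need to be repaired before the argument closes, even setting aside the explicit constant $21\cdot 5^{\lfloor k^2/4\rfloor}$, which requires the careful matrix counting Rogers actually carries out.
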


Note that $\Big(\underset{1\le r\le k}{\bigcup}\:\underset{q\in \NN}{\bigcup} \mathcal D^k_{r,q} \Big)- \mathcal R^k_1$ consists of matrices $D$ such that $D_{ij}\in \{0, \pm 1\}$ for $1\le i\le r$ and $1\le j\le k$. In particular, such a matrix $D$ is contained in $\mathcal D^k_{r,1}$ for some $1\le r\le k$.

\begin{proposition}[{\cite[Lemma 7]{Rogers56-2}}]\label{Rogers 56}
Define
\begin{equation}\label{R2}
\mathcal R^k_2
=\left\{D\in \bigcup_{1\le r\le k} \mathcal D^k_{r,1} - \mathcal R^k_1 :
\begin{array}{c}
\text{There is $1\le j\le k$ such that}\\
\text{$[D]^j$ has at least two nontrivial entries}\end{array} \right\},
\end{equation}
where $\mathcal R^k_1$ is the set defined as in \eqref{R1}.

Assume that $k\le d-1$.
Let $F=\prod_{j=1}^k I_{A_j}$ be the product of indicator functions of $A_j$, where each $A_j \subseteq \RR^d$ is a Borel set contained in some Borel set $B\subseteq \RR^d$ with $\vol(B)=V$.
Then
\[
\sum_{r=1}^{k-1}\sum_{D\in \mathcal D^k_{r,1}\cap \mathcal R^k_2} c_D\int_{(\RR^d)^r} F\left(\frac 1 q (\vv_1, \ldots, \vv_r)D\right) d\vv_1 \cdots d\vv_r
< 2\cdot 3^{\lfloor k^2/4 \rfloor} \sqrt{\frac 3 4}^{\:d} (V+1)^k,
\]
where $c_D$ is as in \eqref{regular coeff}.
\end{proposition}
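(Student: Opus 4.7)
The plan is to adapt Rogers' strategy from \cite{Rogers56-2}. For $D \in \mathcal D^k_{r,1}$ we have $q = 1$, so $c_D = 1$, and since $D \notin \mathcal R^k_1$ every entry of $D$ lies in $\{0, \pm 1\}$. Using the pivot columns $[D]^{j_i} = \ve_i$, the integrand becomes
\[
\prod_{i=1}^r I_{A_{j_i}}(\vv_i)\cdot \prod_{j \notin J_D} I_{A_j}\Bigl(\sum_{i=1}^r D_{ij}\,\vv_i\Bigr),
\]
so the pivot factors force $\vv_i \in A_{j_i} \subseteq B$, and the task is to extract the dimensional decay from the remaining constraints indexed by $j \notin J_D$.

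Since $D \in \mathcal R^k_2$, I would single out a non-pivot column $j^*$ with at least two nonzero entries, say in rows $i_1 < i_2$. The associated factor reads $I_{A_{j^*}}(\epsilon_1 \vv_{i_1} + \epsilon_2 \vv_{i_2} + \vw_D)$, with $\epsilon_1, \epsilon_2 \in \{\pm 1\}$ and $\vw_D$ a $\{0,\pm 1\}$--linear combination of the other pivot variables. The main geometric step is to establish the uniform bound
\[
\sup_{\vw \in \RR^d}\int_{\RR^d}\!\int_{\RR^d} I_B(\vv_{i_1})\,I_B(\vv_{i_2})\,I_B(\epsilon_1 \vv_{i_1} + \epsilon_2 \vv_{i_2} + \vw)\,d\vv_{i_1}\,d\vv_{i_2}\;\le\; c_0\,(V+1)^2\,\bigl(\tfrac{3}{4}\bigr)^{d/2}.
\]
After a sign change absorbing $\epsilon_1, \epsilon_2$, Riesz rearrangement for three sets replaces $B$ by a Euclidean ball $B_\rho$ of volume $V$ centered at the origin; by Fubini, the ball case reduces to evaluating $\int_{B_\rho}\vol(B_\rho \cap (B_\rho - \vv))\,d\vv$. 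The intersection of two congruent balls of radius $\rho$ whose centers lie at the typical distance $\sim\rho$ consists of a pair of spherical caps of relative height $\tfrac{1}{2}$, each of volume $\sim V\bigl(\tfrac{3}{4}\bigr)^{d/2}$ by the concentration of volume near the equator, which produces the declared decay factor $\sqrt{3/4}^{\,d}$.

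Granted this uniform estimate, each individual integral in the sum is at most $c_1 (V+1)^k\,\sqrt{3/4}^{\,d}$, via the trivial bounds $\int I_{A_{j_i}} \le V$ for the remaining $r - 2$ pivot variables and $\int I_{A_j} \le V$ (after replacing $I_{A_j}$ by $I_B$) for the remaining $k - r - 1$ non-pivot factors. Finally I would count matrices in $\mathcal D^k_{r,1} \cap \mathcal R^k_2$: for a fixed pivot set $J_D$ the non-pivot entries lie in $\{0, \pm 1\}$, yielding at most $3^{r(k-r)} \le 3^{\lfloor k^2/4 \rfloor}$ matrices, while the $\binom{k-1}{r-1}$ admissible choices of $J_D$ (with $j_1 = 1$ fixed) and the sum over $1 \le r \le k - 1$ contribute only a further absolute constant. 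Combining these two bounds yields the proposition, with the numerical factor $2$ absorbing the combinatorial constants. The hardest part is the uniformity in $\vw$ of the geometric inequality: Riesz rearrangement must be applied so that the shift is absorbed into the rearranged factor (using $(I_{B-\vw})^* = (I_B)^*$), and the spherical-cap asymptotics must be controlled explicitly to pin down the precise rate $\sqrt{3/4}^{\,d}$.
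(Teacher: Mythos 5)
The paper does not supply its own proof of this proposition; it simply cites Rogers' Lemma~7 from \cite{Rogers56-2}, so there is nothing internal to compare against except the shape of the bound. Your plan captures the two structural ingredients that the bound $2\cdot 3^{\lfloor k^2/4\rfloor}\sqrt{3/4}^{\,d}(V+1)^k$ visibly encodes --- a combinatorial count of the $\{0,\pm1\}$-matrices of order $3^{r(k-r)}\le 3^{\lfloor k^2/4\rfloor}$, and an exponential decay $\sqrt{3/4}^{\,d}$ coming from the volume of the lens $B_\rho\cap(B_\rho-\vv)$ after symmetric rearrangement --- and this is in the spirit of Rogers' original argument, which also passes to balls via a rearrangement inequality and estimates the intersection of two congruent balls.

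A few points deserve tightening. First, the assertion that ``the $\binom{k-1}{r-1}$ admissible choices of $J_D$ and the sum over $1\le r\le k-1$ contribute only a further absolute constant'' is not literally correct: $\sum_{r}\binom{k-1}{r-1}=2^{k-1}$, so your argument as written produces a $k$-dependent prefactor of roughly $2^{k-1}\cdot 3^{\lfloor k^2/4\rfloor}$ rather than the stated $2\cdot 3^{\lfloor k^2/4\rfloor}$. Rogers obtains the cleaner constant by bounding the total number of admissible $\{0,\pm1\}$-matrices (over all $r$ and all pivot sets at once) directly by $2\cdot 3^{\lfloor k^2/4\rfloor}$, using that non-pivot columns have support restricted to coordinates indexed by pivots already placed, so your per-$J_D$ count $3^{r(k-r)}$ double-counts substantially. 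This does not matter for the application in the paper (only the exponential rate in $d$ is used), but it means your version proves a formally weaker inequality. Second, the $k-r-1$ remaining non-pivot factors $I_{A_j}\bigl((\vv D)_j\bigr)$ are functions of the already-used integration variables, not fresh ones, so they should be bounded by $1$ rather than integrated to give $V$; this again only weakens the prefactor. Third, the cap-volume asymptotic is off by a polynomial factor --- a cap of relative height $1/2$ has volume $\asymp V\,(3/4)^{d/2}/\sqrt d$, not $V(3/4)^{d/2}$ --- but since you only need an upper bound with the correct exponential rate, a cruder containment such as $B_\rho\cap(B_\rho-\vv)\subseteq B\bigl(-\vv/2,\sqrt{\rho^2-\|\vv\|^2/4}\bigr)$ already yields $\vol\le V(1-\|\vv\|^2/4\rho^2)^{d/2}$, and integrating $d\int_0^1 t^{d-1}(1-t^2/4)^{d/2}\,dt=O\bigl((3/4)^{d/2}\bigr)$ gives the required rate without the delicate cap asymptotics. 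The Riesz step itself is applied correctly: the shift $\vw_D$ disappears because $(I_{A_{j^*}-\vw})^\ast=(I_{A_{j^*}})^\ast$, which is precisely what makes the sup over $\vw$ harmless.
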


The following proposition is easily induced from \cite[page 312]{Rogers56-2} and \cite[Lemma 3]{Sod2011}. However, for the sake of completeness, let us prove the proposition.
\begin{proposition}\label{correspondonce with partitions}
For each $1\le r \le k$, define
\begin{equation}\begin{split}\label{Main}
\mathcal M^k_r&=\mathcal D^k_{r,1}-\left(\mathcal R^k_1 \cup \mathcal R^k_2\right)\\
&=\left\{D\in \mathcal D^k_{r,1} : \text{For each column, there is a unique nonzero entry which is $\pm 1$}\right\}.
\end{split}\end{equation}
There is a $2^{k-r}$-to-one correspondence between $\mathcal M^k_r$ and the collection $\mathcal P^k_r$ of partitions $P=\{B_1, \ldots, B_r\}$ of $\{1,\ldots, k\}$ with $B_j\neq \emptyset$ for all $j$ via the map
\begin{equation}\label{map to partitions}
D \mapsto \{B_1, \ldots, B_r\},\;\text{where}\;
B_i=\{j: D_{ij}\neq 0\}\;\text{for}\;1\le i\le r.
\end{equation}
\end{proposition}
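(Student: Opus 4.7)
The plan is to first unpack the set-theoretic definition of $\mathcal{M}^k_r$ into the explicit column description, and then do the partition bookkeeping by fixing the ordering of blocks according to their minimum elements.

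First I would verify the equality in \eqref{Main}. A matrix $D\in\mathcal D^k_{r,1}$ has $q=1$, so $D\notin \mathcal R^k_1$ forces every entry to lie in $\{0,\pm 1\}$. The definition \eqref{R2} of $\mathcal R^k_2$ then says $D\notin \mathcal R^k_2$ iff no column of $D$ carries two or more nonzero entries; combined with condition (1) of $\mathcal D^k_{r,q}$ (every column is nonzero), each column has \emph{exactly} one nonzero entry, which must be $\pm 1$. Conversely, any such $D$ automatically satisfies $\gcd(D_{ij})=1$ (the identity submatrix supplies $1$'s), so the two descriptions agree.

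Next I would check that the assignment \eqref{map to partitions} is well-defined into $\mathcal P^k_r$. Since every column of $D\in\mathcal M^k_r$ has a unique nonzero entry, the sets $B_1,\ldots,B_r$ are pairwise disjoint and cover $\{1,\ldots,k\}$. Condition (2) of $\mathcal D^k_{r,1}$ provides the columns $j_1<\cdots<j_r$ with $D_{ij_i}=1$, so $j_i\in B_i$ and no block is empty; hence $\{B_1,\ldots,B_r\}\in \mathcal P^k_r$.

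The final step is to count preimages. Given an unordered partition $P=\{B_1,\ldots,B_r\}\in \mathcal P^k_r$, I would reorder the blocks so that $\min B_1<\min B_2<\cdots<\min B_r$; this is forced because condition (3) of $\mathcal D^k_{r,1}$ says the first nonzero column of row $i$ is precisely $j_i=\min B_i$, and by definition $1=j_1<j_2<\cdots<j_r$. With the labeling fixed, constructing a matrix $D$ mapping to $P$ amounts to choosing, for each $i$, the signs of the entries $D_{ij}$ for $j\in B_i$, subject to $D_{ij_i}=+1$ (from the identity-block condition). That leaves $|B_i|-1$ free sign choices in row $i$, giving
\[
\prod_{i=1}^{r} 2^{|B_i|-1} = 2^{\left(\sum_{i=1}^r|B_i|\right)-r} = 2^{k-r}
\]
preimages. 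Since every such choice visibly lands in $\mathcal M^k_r$, the map \eqref{map to partitions} is surjective and exactly $2^{k-r}$-to-one, as claimed. No step looks to present a real obstacle; the only care needed is to match the ordering convention $j_1<\cdots<j_r$ against the unordered nature of $\mathcal P^k_r$ so that the counting of sign choices is not overcounted.
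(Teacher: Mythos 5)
Your proof is correct and follows essentially the same route as the paper: identify $\min B_i = j_i$ via condition (3) of $\mathcal D^k_{r,1}$, note that $D_{ij_i}=1$ is forced, and count the $2^{|B_i|-1}$ free sign choices per row to get $2^{k-r}$ preimages. The only difference is that you spell out the equality in \eqref{Main} and the well-definedness of the map in slightly more detail, but the substance matches.
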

\begin{proof}
If $\{B_1, \ldots, B_r\}$ is the image of $D$ under the above map, then $\min B_i=j_i$, where $j_i$ is as in Theorem~\ref{Rogers higher moment formula} and $D_{ij_i}=1$ for any $1\le i\le r$. Hence for a given partition $\{B_1, \ldots, B_r\}\in \mathcal P^k_r$ with $1=\min B_1<\min B_2 < \cdots < \min B_r$, the corresponding matrices are of the form
\[
D_{ij}=\left\{\begin{array}{cl}
1, &\text{if }j=j_i;\\
\pm 1, &\text{if }j\in B_i-\{j_i\};\\
0, &\text{otherwise.}\end{array}\right.
\]
Therefore the number of $D\in \mathcal M^k_r$ which maps to $\{B_1, \ldots, B_r\}\in \mathcal P^k_r$ is $2^{k-r}$.
\end{proof}

\section{Convergence to Poisson distribution}\label{Section: Convergence to Poisson Distribution}
\subsection{Incomplete Moment Formulas of higher ranks}

For a bounded and compactly supported function $F:(\RR^d)^k\rightarrow \RR$, define
\begin{equation}\label{primitive Siegel}
\PSiegel{k}{F}(g\ZZ^d)
=\sum_{\vv_1, \ldots, \vv_k\in P(\ZZ^d)}
F(g\vv_1,\cdots, g\vv_k),\quad\forall g\in \SL_d(\RR).
\end{equation}

Applying Riesz--Kakutani--Markov representation theorem, and since $\PSiegel{k}{F}\le \Siegel{k}{F}$, it is easy to obtain the following proposition.

\begin{proposition}\label{Primitive Moment Formula}
For each $1\le k\le d-1$, $q\in \NN$ and $1\le r \le k$, the set $\mathcal D^k_{r,q}$ and for each $D\in \mathcal D^k_{r,q}$, the constant $c_D>0$ be as in Theorem~\ref{Rogers higher moment formula}.
Define the set
\[
\hatcal{D}^k_{r,q}=\left\{D\in \mathcal D^k_{r,q} : \begin{array}{c}
\text{There are } \vw_1, \ldots, \vw_r \in P(\ZZ^d) \text{ such that}\\
\frac 1 q (\vw_1, \ldots, \vw_r)D\in P(\ZZ^d)^k.\end{array}\right\}.
\] 

Then there is $0\le \widehat{c}_D\le c_D$ for each $D\in \hatcal{D}^k_{r,q}$ so that the following holds.
\begin{equation}\label{eq: Primitive Moment Formula}
\int_{X_d} \PSiegel{k}{F}(g\ZZ^d) d\mu_d(g)
=\sum_{r=1}^k\sum_{q\in \NN} \sum_{D\in \hatcal{D}^k_{r,q}}
\widehat{c}_D \int_{(\RR^d)^r} F\left(\frac 1 q (\vv_1, \ldots, \vv_r) D \right) d\vv_1 \cdots d\vv_r.
\end{equation}
\end{proposition}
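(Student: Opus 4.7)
The plan is to use the Riesz--Kakutani--Markov representation theorem together with the pointwise domination $0 \le \PSiegel{k}{F} \le \Siegel{k}{F}$ (valid for any non-negative $F$) and the $\SL_d(\RR)$-invariance of the primitive Siegel transform, in order to reduce the primitive moment formula to Rogers' formula (Theorem~\ref{Rogers higher moment formula}).

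First, for $F\in C_c((\RR^d)^k;\RR_{\ge 0})$ the pointwise domination together with Theorem~\ref{Rogers higher moment formula} (valid for $1\le k\le d-1$) shows that $\Lambda(F):=\int_{X_d}\PSiegel{k}{F}(g\ZZ^d)\,d\mu_d(g)$ is finite. Extending $\Lambda$ to $C_c((\RR^d)^k)$ by linearity and applying Riesz--Kakutani--Markov yields a positive Radon measure $\widehat{\nu}$ on $(\RR^d)^k$ with $\Lambda(F)=\int F\,d\widehat{\nu}$. Rogers' formula identifies the analogous Radon measure $\nu$ for the full Siegel transform as
\[
\nu \;=\; \sum_{r=1}^k \sum_{q\in \NN} \sum_{D\in \mathcal{D}^k_{r,q}} c_D\,(\pi_{q,D})_\ast m_r, \qquad \pi_{q,D}(\vv_1,\ldots,\vv_r):=\tfrac{1}{q}(\vv_1,\ldots,\vv_r)D,
\]
where $m_r$ is the Lebesgue measure on $(\RR^d)^r$, and the pointwise domination translates directly to the measure inequality $\widehat{\nu}\le \nu$.

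Next, I would decompose $\widehat{\nu}$ along the Rogers strata. The images $\pi_{q,D}((\RR^d)^r)$ for distinct triples $(r,q,D)$ are generically disjoint as subvarieties of $(\RR^d)^k$: different ranks $r$ yield subvarieties of different dimensions, while different $D$'s of the same rank meet only on a proper algebraic subvariety of $(\pi_{q,D})_\ast m_r$-measure zero. Hence the inequality $\widehat{\nu}\le \nu$ propagates to $\widehat{\nu}=\sum_{r,q,D}\widehat{\nu}_{r,q,D}$ with $\widehat{\nu}_{r,q,D}\le c_D(\pi_{q,D})_\ast m_r$. If $D\notin \hatcal{D}^k_{r,q}$, the stratum cannot be reached by any primitive tuple, so $\widehat{\nu}_{r,q,D}=0$. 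For $D\in \hatcal{D}^k_{r,q}$, both $\widehat{\nu}_{r,q,D}$ and $(\pi_{q,D})_\ast m_r$ are invariant under the diagonal $\SL_d(\RR)$-action on $(\RR^d)^k$, inherited from the $\SL_d(\RR)$-invariance of $\mu_d$ and the equivariance of $\PSiegel{k}{F}$. Combined with the transitivity of $\SL_d(\RR)$ on the open dense set of full-rank configurations in $\pi_{q,D}((\RR^d)^r)$, this forces the Radon--Nikodym density of $\widehat{\nu}_{r,q,D}$ with respect to $(\pi_{q,D})_\ast m_r$ to be a constant $\widehat{c}_D$, and the measure inequality delivers $0\le \widehat{c}_D\le c_D$, from which \eqref{eq: Primitive Moment Formula} follows.

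The main obstacle will be the stratum decomposition step: justifying that $\widehat{\nu}\le \nu$ descends to a componentwise inequality even though the Rogers sum $\sum c_D(\pi_{q,D})_\ast m_r$ is not manifestly a sum of mutually singular measures. The generic-disjointness observation for the images of $\pi_{q,D}$ (a dimension and algebraic-subvariety check in the space of matrix configurations) is what carries this step; once it is in place, the subsequent $\SL_d(\RR)$-invariance argument producing the constants $\widehat{c}_D\in[0,c_D]$ is standard.
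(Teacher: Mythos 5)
Your proposal is correct in spirit but organizes the argument in a genuinely different way from the paper, and along the way leaves a couple of points under-justified.

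The paper first decomposes the \emph{discrete sum} over primitive tuples: it observes that condition (2) in the definition of $\mathcal D^k_{r,q}$ (the $j_i$-th column of $D$ equals $q\ve_i$) forces $\vw_i = \vv_{j_i}$, so primitivity of the image tuple automatically forces primitivity of the $\vw_i$'s, and hence
\[
P(\ZZ^d)^k=\bigsqcup_{r=1}^k \bigsqcup_{q\in \NN} \bigsqcup_{D\in \hatcal D^k_{r,q}} \Big\{\tfrac 1 q (\vw_1, \ldots, \vw_r) D : (\vw_1, \ldots, \vw_r)\in \widehat{\Phi}_D \Big\}.
\]
Only after this exact partition does the paper apply Riesz--Kakutani--Markov \emph{stratum by stratum}, identifying each stratum's contribution as $\widehat c_D$ times Lebesgue measure via invariance, and obtaining $\widehat c_D\le c_D$ from $\widehat\Phi_D\subseteq\Phi_D$. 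You instead apply RKM once globally to the functional $\Lambda(F)=\int_{X_d}\PSiegel{k}{F}\,d\mu_d$, obtain a Radon measure $\widehat\nu\le\nu$ from the pointwise domination $\PSiegel{k}{F}\le\Siegel{k}{F}$, and then decompose $\widehat\nu$ measure-theoretically along the Rogers strata using mutual singularity. This is a workable alternative, but it shifts the burden to two facts you assert rather than prove: (i) that distinct triples $(r,q,D)$ in Rogers' parametrization yield \emph{distinct} linear subspaces $\pi_{q,D}((\RR^d)^r)\subseteq(\RR^d)^k$, so the pushforward measures really are pairwise mutually singular; and (ii) that $\widehat\nu_{r,q,D}=0$ for $D\notin\hatcal D^k_{r,q}$, phrased as ``the stratum cannot be reached by any primitive tuple.'' Point (ii) as stated is loose: the continuous subspace $\pi_{q,D}((\RR^d)^r)$ can certainly contain primitive tuples belonging to lower-dimensional Rogers strata; what you actually need is that the generic (full-rank) part of the $D$-subspace contains no primitive tuples, which again hinges on the combinatorial observation $\vw_i=\vv_{j_i}$ — the very identity the paper handles up front. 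So the approaches are both valid, but the paper's ordering makes the combinatorial input explicit and localizes RKM to one evidently $\SL_d(\RR)$-invariant functional per stratum, while yours trades that for a slightly subtler measure-decomposition step that needs those two facts spelled out to be complete.
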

\begin{proof}
The proof is almost identical with the first step in the proof of Theorem 3.1 in \cite{Han22} (see also the beginning of \cite[Section 3]{Han22}), so that let us provide the rough sketch here.
For each $D\in \hatcal D^k_{r,q}$, define 
\begin{equation}\label{Set Phi}
\widehat{\Phi}_D=\left\{(\vw_1, \ldots, \vw_r)\in P(\ZZ^d)^r : \begin{array}{c}
\rk(\vw_1, \ldots, \vw_r)=r\;\text{and}\\
\frac 1 q (\vw_1, \ldots, \vw_r)D\in P(\ZZ^d)^k \end{array}\right\}.
\end{equation}
It is easy to show that
\[
\left\{(\vv_1, \ldots, \vv_k): \vv_1, \ldots, \vv_k \in P(\ZZ^d)\right\}
=\bigsqcup_{r=1}^k \bigsqcup_{q\in \NN} \bigsqcup_{D\in \hatcal D^k_{r,q}} 
\left\{\frac 1 q (\vw_1, \ldots, \vw_r) D : (\vw_1, \ldots, \vw_r)\in \widehat{\Phi}_D \right\}
\]
and the right hand side of \eqref{eq: Primitive Moment Formula} can be decomposed as
\[
\int_{X_d} \PSiegel{k}{F}(g\ZZ^d) d\mu_d(g) 
=\sum_{r=1}^k\sum_{q\in \NN}\sum_{D\in \hatcal D^k_{r,q}}
\int_{X_d} \sum_{\scriptsize \begin{array}{c}
(\vw_1, \ldots, \vw_r)\\
\in \widehat{\Phi}_D \end{array}}
F\left(\frac 1 q (g\vw_1, \ldots, g\vw_r) D\right) d\mu_d(g).
\]

It follows from Riesz--Kakutani--Markov representation theorem that for each $D\in \hatcal D^k_{r,q}$, there is $\widehat{c}_D>0$ for which
\begin{equation}\label{Riesz}
\int_{X_d} \sum_{\scriptsize \begin{array}{c}
(\vw_1, \ldots, \vw_r)\\
\in \widehat{\Phi}_D \end{array}}
F\left(\frac 1 q (g\vw_1, \ldots, g\vw_r) D\right) d\mu_d(g)
=\widehat{c}_D \int_{(\RR^d)^r} F\left(\frac 1 q (\vw_1, \ldots, \vw_r) D\right) d\vw_1 \ldots d\vw_r,
\end{equation}
which explains the integrals in the left summation in \eqref{eq: Primitive Moment Formula}.
Moreover, since $\widehat \Phi_D \subseteq \Phi_D$, we have that $\widehat c_D \le c_D$.
\end{proof}

It is very difficult to compute $\widehat c_D$ for a general $D\in \hatcal D^k_{r,q}$ when $2\le r\le k-1$, which makes hard to obtain the $k$-th moment formula for the primitive Siegel transform when $k\ge 3$.
However, one can compute constants $\widehat c_D$ for special matrices $D$ by comparing to the constants $c_D$ of the higher moment formula for the Siegel transform defined as in \eqref{R Siegel},
and we will see that these computations are enough to obtain our main theorems in the next subsection.

\begin{proposition}\label{Computation of main coeff}
Recall the definition of $\mathcal M^k_r$ in \eqref{Main}. It follows that $\mathcal M^k_r\subseteq \hatcal{D}^k_{r,1}$ and for $D\in \mathcal M^k_r$,
\[
\widehat{c}_D=\frac 1 {\zeta(d)^r}.
\]
\end{proposition}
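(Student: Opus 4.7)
The plan is to exploit a Möbius-type decomposition relating the Rogers moment formula (Theorem~\ref{Rogers higher moment formula}) to its primitive counterpart (Proposition~\ref{Primitive Moment Formula}), thereby reducing the computation of $\widehat c_D$ to the coefficient formula \eqref{regular coeff} together with an identity involving $\zeta(d)^r$. First I verify the inclusion $\mathcal M^k_r \subseteq \hatcal D^k_{r,1}$. By \eqref{Main}, for any $D\in \mathcal M^k_r$ each column $[D]^j$ equals $\pm\ve_{i(j)}$ for some $i(j) \in \{1, \ldots, r\}$, so for any $\vw_1, \ldots, \vw_r \in P(\ZZ^d)$ each column of $(\vw_1, \ldots, \vw_r) D$ is $\pm\vw_{i(j)} \in P(\ZZ^d)$. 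This establishes the inclusion and simultaneously simplifies the set \eqref{Set Phi} to
\[
\widehat \Phi_D = \{(\vw_1, \ldots, \vw_r) \in P(\ZZ^d)^r : \rk(\vw_1, \ldots, \vw_r) = r\};
\]
the analogous non-primitive set $\Phi_D$ arising in the decomposition behind Theorem~\ref{Rogers higher moment formula} reduces, by the same reasoning, to the rank-$r$ tuples in $(\ZZ^d-\{\origin\})^r$.

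Next I read off $c_D$ directly from \eqref{regular coeff}: since $q=1$, the only admissible $\vx$ is $\origin$, and so $c_D = 1$. Thus the $D$-term of the Rogers identity reads
\[
\int_{X_d} \sum_{(\vw_i) \in \Phi_D} F\bigl((g\vw_1, \ldots, g\vw_r) D\bigr) d\mu_d(g) = \int_{(\RR^d)^r} F\bigl((\vv_1, \ldots, \vv_r) D\bigr) d\vv_1 \cdots d\vv_r.
\]
Since every $\vw \in \ZZ^d-\{\origin\}$ factors uniquely as $n\vw'$ with $n\in\NN$ and $\vw' \in P(\ZZ^d)$, and since $(n_1\vw'_1, \ldots, n_r\vw'_r)$ has rank $r$ iff $(\vw'_1,\ldots,\vw'_r)$ does, I obtain the disjoint decomposition
\[
\Phi_D = \bigsqcup_{(n_1, \ldots, n_r) \in \NN^r}\bigl\{(n_1 \vw'_1, \ldots, n_r \vw'_r) : (\vw'_1, \ldots, \vw'_r) \in \widehat \Phi_D\bigr\}.
\]
Applying \eqref{Riesz} to each inner sum (with a rescaled test function absorbing the $n_i$'s into $F$) and then substituting $\vv_i \mapsto \vv_i/n_i$ (Jacobian $(n_1 \cdots n_r)^{-d}$) transforms the left-hand side into
\[
\widehat c_D \sum_{(n_i) \in \NN^r} \frac{1}{(n_1 \cdots n_r)^d} \int_{(\RR^d)^r} F\bigl((\vv_i) D\bigr) d\vv = \widehat c_D\, \zeta(d)^r \int_{(\RR^d)^r} F\bigl((\vv_i) D\bigr) d\vv.
\]
Since $F$ is arbitrary, comparison with the Rogers value forces $\widehat c_D\, \zeta(d)^r = 1$, i.e., $\widehat c_D = 1/\zeta(d)^r$.

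The delicate points in this plan are the bijectivity of the Möbius decomposition of $\Phi_D$ (immediate from the unique primitive factorization of nonzero integer vectors and the compatibility of the rank condition with positive rescaling) and the careful bookkeeping of the Jacobian under simultaneous rescaling of the $r$ vector variables. Both are routine once made explicit, and the whole argument is the natural $r$-variable analog of the classical derivation of the primitive Siegel mean value formula from its unrestricted version, which recovers the $r=1$ case.
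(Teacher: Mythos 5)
Your proof is correct and follows essentially the same route as the paper: establish the inclusion by observing that the columns of $(\vw_1,\ldots,\vw_r)D$ are signed copies of the $\vw_i$, use $c_D=1$, decompose $\Phi_D$ over $\NN^r$ via unique primitive factorization, apply the Riesz-type identity \eqref{Riesz} to each rescaled inner sum, and factor out $\zeta(d)^r$ via the $n^{-d}$ Jacobian. Your reading of $c_D=1$ directly off \eqref{regular coeff} is a slightly more self-contained justification than the paper's appeal to the proof of Theorem~\ref{Rogers higher moment formula}, but the argument is otherwise identical.
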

\begin{proof}
Since the matrix $D\in \mathcal M^k_r$ sends $(\vv_1, \ldots, \vv_r)$ to $(\vv_1, \pm \vv_{i_2}, \ldots, \pm \vv_{i_{k-1}})$, where $i_2, \ldots, i_{k-1}\in \{1,\ldots, r\}$, it follows that $(\vv_1, \ldots, \vv_r)D\in P(\ZZ^d)^k$ if $\vv_1, \ldots, \vv_r\in P(\ZZ^d)$, hence $\mathcal M^k_r \subseteq \hatcal D^k_{1,r}$.

Let us show that $\widehat{c}_D=1/\zeta(d)^r$. 
Let $\widehat{\Phi}_D$ be the set defined as in \eqref{Set Phi} (with $q=1$) and define 
\[
\Phi_D=\left\{(\vw_1, \ldots, \vw_r)\in (\ZZ^d)^r : \rk(\vw_1, \ldots, \vw_r)=r\;\text{and}\;(\vw_1, \ldots, \vw_r)D\in P(\ZZ^d)^k \right\}.
\]

The following equality is known from the proof of Theorem~\ref{Rogers higher moment formula} (see also \cite[Theorem 3.1]{Han22}).
\[
\int_{X_d} \sum_{\scriptsize \begin{array}{c}
(\vw_1, \ldots, \vw_r)\\
\in \Phi_D\end{array}} 
F\left((g\vw_1, \ldots, g\vw_r)D\right) d\mu_d(g)
=\int_{(\RR^d)^r} F\left((\vv_1, \ldots, \vv_r)D\right) d\vv_1 \cdots d\vv_r.
\]
In other words, it holds that $c_D=1$ for any $D\in \mathcal M^k_r$.

It is obvious that 
$\Phi_D=\bigcup_{\vm\in \NN^r}
\left\{(m_1\vw_1, \ldots, m_r\vw_r) : (\vw_1, \ldots, \vw_r)\in \widehat{\Phi}_D \right\}$, where $\vm=(m_1, \ldots, m_r)$.
For any bounded and compactly supported function $F$ on $(\RR^d)^k$, and for each $\vm\in \NN^r$, define
\[
F_\vm(\vv_1, \ldots, \vv_r)=F(m_1 \vv_1, \ldots, m_r \vv_r).
\]
It follows that
\[\begin{split}
&\int_{(\RR^d)^r} F\left((\vv_1, \ldots, \vv_r)D\right) d\vv_1 \cdots d\vv_r
=\int_{X_d} \sum_{\scriptsize \begin{array}{c}
(\vw_1, \ldots, \vw_r)\\
\in \Phi_D\end{array}} 
F\left((g\vw_1, \ldots, g\vw_r)D\right) d\mu_d(g)\\
&=\sum_{\vm \in \NN^r} \int_{X_d} \sum_{\scriptsize \begin{array}{c}
(\vw_1, \ldots, \vw_r)\\
\in \widehat{\Phi}_D\end{array}} 
F\left( (gm_1\vw_1, \ldots, gm_r\vw_r)D\right) d\mu_d(g)\\
&=\sum_{\vm\in \NN^r} \int_{X_d} \sum_{\scriptsize \begin{array}{c}
(\vw_1, \ldots, \vw_r)\\
\in \widehat{\Phi}_D\end{array}} F_\vm \left((g\vw_1, \ldots, g\vw_r)D\right) d\mu_d(g)
=\sum_{\vm\in \NN^r} \widehat{c}_D \int_{(\RR^d)^r}F_\vm \left( (\vv_1, \ldots, \vv_r)D\right)d\vv_1 \cdots d\vv_r\\
&=\sum_{\vm\in \NN^r} \widehat{c}_D \frac 1 {m_1^d \cdots m_r^d}
\int_{(\RR^d)^r} F\left((\vv_1, \ldots, \vv_r)D\right) d\vv_1 \cdots d\vv_r,
\end{split}\]
hence $1=\widehat c_D\cdot \sum_{\vm\in \NN^r} 1/m_1^d\cdots m_r^d=\widehat c_D \cdot \zeta(d)^r$, i.e., $\widehat c_D=1/\zeta(d)^r$.
\end{proof}

\subsection{Proof of Theorem~\ref{Poisson distribution} and \ref{Poisson process}}


For each $\ell\in \NN$ and any $\vk=(k_1,\ldots, k_\ell)\in \NN^\ell$, fix $0\le t_1< t_2 < \cdots < t_\ell$.
Let $k=\sum_{j=1}^\ell k_j$. Let $\mathcal P^k_r$ be the collection of partitions $P=\{B_1, \ldots, B_r\}$ of $\{1, \ldots, k\}$ with $B_j\neq \emptyset$ for all $j$ and $\mathcal P^k=\bigcup_{r=1}^k \mathcal P^k_r$.
For any $B\subseteq \{1, \ldots, k\}$, set $t_{B}=\min\{ t_\beta : \beta \in B\}$.

Recall that the $\vk$-th moment of $(\widetilde W(t_1), \ldots, \widetilde W(t_\ell))$, where $\{\widetilde{W}(t): t\in \RR_{\ge 0}\}$ is the Poisson point process with intensity $1/2$, is given by
\[
\EE\left(\prod_{j=1}^\ell \widetilde W(t_j)^{k_j}\right)
=\sum_{P\in \mathcal P^k} 2^{-\# P} \prod_{B\in P} t_B
\] 
(see \cite[Equation (10)]{Sod2011} for instance).

\begin{proof}[Proof of Theorem~\ref{Poisson process}]
Let $\vk\in \NN^\ell$, $k\in \NN$ and $0\le t_1 < \cdots < t_\ell$ be as above. 
It suffices to show that 
\[
(\widehat{W}_d(t_1), \ldots, \widehat{W}_d(t_\ell))\rightarrow (\widetilde W(t_1), \ldots, \widetilde W(t_\ell))
\quad\text{as}\quad d\rightarrow \infty
\]
in distribution (see \cite[Theorem 12.6]{Billingsley} for instance).
Using the (multi-dimensional) method of moments, it is enough to show that
\[
\lim_{d\rightarrow \infty} \EE\left(\prod_{j=1}^\ell \widehat{W}_d(t_j)^{k_j}\right)
=\sum_{P\in \mathcal P^k} 2^{-\# P} \prod_{B\in P} t_B.
\]

We will prove the above formula by two steps. 
First, we claim that
\begin{equation}\label{eq 1: Poisson process}
\EE\left(\prod_{j=1}^\ell \widehat{W}_d(t_j)^{k_j}\right)
=\frac 1 {2^k}\sum_{J\subseteq \{1,\ldots, k\}}M_{J} \prod_{j\in J} \frac {t_j}{\zeta(d)}
+O\left(5^{\lfloor k^2/4\rfloor}2^{-d}t_{\ell}^k + 3^{\lfloor k^2/4\rfloor}\sqrt{3/4}^dt_{\ell}^k\right),
\end{equation}
where $M_J$ is the number of $D\in \mathcal M^k_{(\# J)}$ for which $J_D=J$.

Recall the definitions of $\mathcal R^k_1$, $\mathcal R^k_2$ and $\mathcal M^k_r$ in Section~\ref{Section: Preliminaries}.
For each $j\in \{1, \ldots, \ell\}$, let $f_{j}$ be the half of the indicator function of $t_j^{1/d}S_d\subseteq \RR^d$. By Theorem~\ref{Primitive Moment Formula},
\[\begin{split}
\EE\left(\prod_{j=1}^\ell \widehat{W}_d(t_j)^{k_j}\right)
&=\int_{X_d} \prod_{j=1}^\ell \widehat f_{j}(\Lambda)^{k_j} d\mu_d(\Lambda)
=\int_{X_d} \PSiegel{k}{\prod_{j=1}^\ell f_{j}^{k_j}}(\Lambda) d\mu_d(\Lambda)\\
&=\sum_{r=1}^k \sum_{q\in \NN} \sum_{D\in \hatcal D^k_{r,q}} \widehat c_D
\int_{(\RR^d)^r} \prod_{j=1}^\ell f_{j}^{k_j} \left(\frac 1 q (\vv_1, \ldots, \vv_r)D\right) d\vv_1 \cdots d\vv_r\\
&=\sum_{r=1}^k \sum_{D\in \mathcal M^k_r} \frac 1 {\zeta(d)^r}
\int_{(\RR^d)^r} \prod_{j=1}^\ell f_{j}^{k_j} \left( (\vv_1, \ldots, \vv_r)D\right)d\vv_1 \cdots d\vv_r\\
&\hspace{0.4in}+\sum_{r=1}^k \sum_{q\in \NN} \sum_{D\in \hatcal D^k_{r,q}\cap (\mathcal R^k_1 \cup \mathcal R^k_2)}
\widehat c_D\int_{(\RR^d)^r} \prod_{j=1}^\ell f_{j}^{k_j} \left(\frac 1 q (\vv_1, \ldots, \vv_r)D\right) d\vv_1 \cdots d\vv_r.
\end{split}\]

By Theorem~\ref{Rogers 55} and Theorem~\ref{Rogers 56}, since $0\le \widehat c_D\le c_D$ and $f_j$'s are non-negative,
\[
\sum_{r=1}^k \sum_{q\in \NN} 
\hspace{-0.1in}\sum_{\scriptsize \begin{array}{c}
D\in \\
\hatcal D^k_{r,q}\cap (\mathcal R^k_1 \cup \mathcal R^k_2)\end{array}}
\hspace{-0.25in}\widehat c_D\int_{(\RR^d)^r} \prod_{j=1}^\ell f_{j}^{k_j} \left(\frac 1 q (\vv_1, \ldots, \vv_r)D\right) d\vv_1 \cdots d\vv_r
=O\left(5^{\lfloor k^2/4\rfloor}2^{-d}t_{\ell}^k + 3^{\lfloor k^2/4\rfloor}\sqrt{3/4}^dt_{\ell}^k\right)
\]
which will vanish as $d$ goes to infinity.

By Theorem~\ref{Computation of main coeff}, since $\# J_D=r$ for each $D\in \mathcal M^k_r$,
\[\begin{split}
&\sum_{r=1}^k \sum_{D\in \mathcal M^k_r} \frac 1 {\zeta(d)^r}
\int_{(\RR^d)^r} \prod_{j=1}^\ell f_{j}^{k_j} \left( (\vv_1, \ldots, \vv_r)D\right)d\vv_1 \cdots d\vv_r\\
&\hspace{0in}=\sum_{r=1}^k \sum_{D\in \mathcal M^k_r} \frac 1 {\zeta(d)^r} \frac 1 {2^k} \prod_{j\in J_D} t_j
= \frac 1 {2^k} \sum_{r=1}^k \sum_{D\in \mathcal M_r^k} \prod_{j\in J_D} \frac {t_j} {\zeta(d)}
=\frac 1 {2^k}\sum_{J\subseteq \{1,\ldots, k\}}M_{J} \prod_{j\in J} \frac {t_j}{\zeta(d)}
\end{split}\]
by the definition of $M_J$, which shows \eqref{eq 1: Poisson process}.

Next, let us show that
\begin{equation}\label{eq 2: Poisson process}
\lim_{d\rightarrow \infty} \frac 1 {2^k}\sum_{J\subseteq \{1, \ldots, k\}} M_J\prod_{j\in J} \frac {t_j}{\zeta(d)}=\sum_{P\in \mathcal P^k} 2^{-\# P} \prod_{B\in P} t_B.
\end{equation}

Recall the notation $P=\{B_1, \ldots, B_r\}$ for an element of $\mathcal P^k_r$, where $1\le r\le k$. For each $J\subseteq \{1, \ldots, k\}$ such that $\# J=r$,
\[
\left\{D\in \mathcal M^k_r : J_D=J\right\}
=\bigcup_{\scriptsize \begin{array}{c}
P\in \mathcal P^k_r\\
\{\min B_i\}=J\end{array}}
\hspace{-0.15in}\left\{D\in \mathcal M^k_r \text{ maps to $P$ as in \eqref{map to partitions}}\right\}
\]
and it holds that $\# \left\{D\in \mathcal M^k_r \text{ maps to $P$ as in \eqref{map to partitions}}\right\}=2^{k-r}$ by Proposition~\ref{correspondonce with partitions}.
It follows that since $\lim_{d\rightarrow\infty}\zeta(d)=1$,
\[\begin{split}
\frac 1 {2^k} \sum_{J\subseteq \{1, \ldots, k\}} M_J \prod_{j\in J} \frac {t_j} {\zeta(d)}
&=\frac 1 {2^k} \sum_{r=1}^k \sum_{\scriptsize \begin{array}{c}
J\\
\# J=r\end{array}} M_J \prod_{j\in J} \frac {t_j} {\zeta(d)}
=\frac 1 {2^k} \sum_{\scriptsize \begin{array}{c}
P\in \mathcal P^k_r\\
\{\min B_i\}=J\end{array}} 2^{k-r} \prod_{B\in P} \frac {t_B}{\zeta(d)}\\
&=\sum_{P\in \mathcal P} 2^{-\# P} \prod_{B\in P} \frac {t_B} {\zeta(d)} 
\underset{d\rightarrow \infty}{\longrightarrow} \sum_{P\in \mathcal P^k} 2^{-\# P} \prod_{B\in P} t_B. 
\end{split}\]

Therefore the result follows from \eqref{eq 1: Poisson process} and \eqref{eq 2: Poisson process}.
\end{proof}

\begin{proof}[Proof of Theorem~\ref{Poisson distribution}]
The theorem follows directly from the proof of Theorem~\ref{Poisson process} with $\ell=1$.
For a given $V\in \RR_{\ge 0}$ and $k\in \NN$, we obtain that 
\[\begin{split}
\lim_{d\rightarrow\infty} \EE\left(\widehat{W}_d^k\right)
&=\lim_{d\rightarrow \infty} \frac 1 {2^k} \sum_{J\subseteq\{1, \ldots, k\}} M_J\left(\frac V {\zeta(d)}\right)^{\# J}
=\lim_{d\rightarrow \infty}\sum_{r=1}^k (\# \mathcal P^k_r) \left( \frac V {2\zeta(d)}\right)^r\\
&=\lim_{d\rightarrow \infty}\sum_{r=0}^\infty \frac {r^k} {r!} \left( \frac V {2\zeta(d)}\right)^r
=\sum_{r=0}^\infty \frac {r^k} {r!} \left( \frac V 2\right)^r,
\end{split}\]
which is the $k$-th moment of Poisson distribution with mean $V/2$.
Here, the second last equality is exactly \cite[Equation (10)]{Rogers56-2}.
\end{proof}

\section{Convergence to normal distribution}\label{Section: Convergence to Normal Distribution}
\subsection{New Moment Formulas}\label{New Moment Formulas}
In \cite{StSo2019}, to investigate the moments of $\widehat{Z}_d$ in Theorem~\ref{Normal distribution}, Strömbergsson and Södergren derived more efficient formulas from Rogers' formulas for the Siegel transform, restricting to the space of functions on $\RR^d$ with zero integrals.
For this purpose, they demonstrated that the (partial) sum of integrals in the Rogers' formula over matrices $D$, extracted from a given matrix $D''$ (see \eqref{D and D''} for the relation between $D$ and $D''$), either becomes annihilated entirely or results in only one surviving term, based on the observation that for such $D$ and $D''$, it follows from the definition \eqref{regular coeff} that $c_D=c_{D''}$.

To adopt their strategy, we want to show the property that $\widehat{c}_D=\widehat{c}_{D''}$ under the relation \eqref{D and D''} which will be challenging since now we don't know the computable description such as \eqref{regular coeff}.

\begin{theorem}\label{New Moment Formula}
Let $F:(\RR^d)^k\rightarrow \RR$ be the product of functions, where each of them is of the form $f_j-\frac 1 {\zeta(d)}\int_{\RR^d} f_j d\vv$ for some bounded and compactly supported function $f_j:\RR^d\rightarrow \RR_{\ge 0}$ for $1\le j\le k$.
Then
\begin{equation}\label{eq 0: New Rogers}
\int_{X_d} \PSiegel{k}{F} (g\ZZ^d) d\mu_d(g)
=\sum_{r=1}^k \sum_{q\in \NN} \sum_{D\in N(\hatcal D^k_{r,q})}
\widehat{c}_D \int_{(\RR^d)^r}
\prod_{j=1}^k f_j\left(\frac 1 q (\vv_1, \ldots, \vv_r)D \right) d\vv_1 \cdots d\vv_r,
\end{equation}
where $N(\hatcal D^k_{r,q})$ is the set of $D\in \hatcal D^k_{r,q}$ for which there are at least two nonzero entries in each row and $\widehat{c}_D$ is defined as in Theorem~\ref{Primitive Moment Formula}.
\end{theorem}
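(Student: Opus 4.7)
My plan is to adapt the argument of Strömbergsson and Södergren for the Rogers formula (\cite[Theorem~2.3]{StSo2019}) to the primitive Siegel transform. Since $F$ is not compactly supported, interpret $\int_{X_d}\PSiegel{k}{F}\,d\mu_d$ via the binomial expansion
\begin{equation*}
\int_{X_d}\prod_{j=1}^{k}(\widehat{f}_j-c_j)\,d\mu_d
= \sum_{J\subseteq \{1,\ldots,k\}} (-1)^{k-|J|} \Bigl(\prod_{j\notin J} c_j\Bigr) \int_{X_d} \prod_{j \in J} \widehat{f}_j\,d\mu_d,
\end{equation*}
where $c_j=\frac{1}{\zeta(d)}\int_{\RR^d}f_j\,d\vv$. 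Each inner integral can then be expanded by Proposition~\ref{Primitive Moment Formula} applied to the product test function $\prod_{j\in J}f_j$ on $(\RR^d)^{|J|}$, producing a quadruple sum indexed by $(J,r',q,D')$ with $D'\in\hatcal{D}^{|J|}_{r',q}$. The goal is to rearrange this sum into the right-hand side of \eqref{eq 0: New Rogers}.

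\textbf{Reorganising the sum.} I would partition $\hatcal{D}^k_{r,q}$ by the set $I(D)\subseteq\{1,\ldots,r\}$ of \emph{singleton rows}---those with exactly one nonzero entry, which by the defining conditions of $\mathcal{D}^k_{r,q}$ must be $D_{i,j_i}=q$. The \emph{reduction} $D''$ of $D$ is obtained by deleting the rows in $I(D)$ and the associated columns $S(D)=\{j_i: i\in I(D)\}$; one checks that $D''\in N(\hatcal{D}^{k-s}_{r-s,q})$ with $s=|I(D)|$. Grouping matrices $D$ by their reductions $D''$, the $D$'s mapping to a given $D''$ correspond to inserting singleton rows at chosen columns $S\subseteq \{1,\ldots,k\}$ outside the columns of $D''$. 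Matching this grouping with the $(J,D')$-grouping coming from the binomial expansion reduces the problem to an algebraic identity: for each fixed $D''$, the sum over enlargements $D$ with $I(D)\neq\emptyset$, weighted by the alternating sign, the $c_j$-factors, and the coefficients, telescopes to zero---leaving only the term $D=D''$ contributing $\widehat{c}_{D''}\int\prod_{j}f_j(\tfrac 1 q \sum_{i}D''_{ij}\vv_i)\,d\vv_1\cdots d\vv_{r-s}$. This telescoping relies on the relations $\int_{\RR^d}f_j\,d\vv=\zeta(d)c_j$ together with---critically---the coefficient equality $\widehat{c}_D=\widehat{c}_{D''}$.

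\textbf{Main obstacle.} The principal difficulty, flagged explicitly in the paragraph preceding the theorem statement, is establishing $\widehat{c}_D=\widehat{c}_{D''}$. In Rogers' non-primitive setting the explicit formula \eqref{regular coeff} makes the analogous identity $c_D=c_{D''}$ routine; here $\widehat{c}_D$ is defined only abstractly through the Riesz--Kakutani--Markov identity \eqref{Riesz} and admits no such closed form. I would prove the equality by applying \eqref{Riesz} to $D$ with a product test function $F(\vx_1,\ldots,\vx_k)=\prod_{j=1}^k g_j(\vx_j)$, for bounded and compactly supported $g_j$, and exploiting the natural decomposition of $\widehat{\Phi}_D$ from \eqref{Set Phi} induced by the singleton rows: the map
\begin{equation*}
(\vw_1,\ldots,\vw_r)\longmapsto\bigl((\vw_i)_{i\notin I(D)};\,(\vw_i)_{i\in I(D)}\bigr)
\end{equation*}
identifies $\widehat{\Phi}_D$ with a subset of $\widehat{\Phi}_{D''}\times P(\ZZ^d)^{s}$, modulo the rank constraint. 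Comparing the resulting identity with \eqref{Riesz} applied to $D''$ with the test function $\prod_{j\notin S(D)}g_j(\vx_j)$ yields, after cancelling the common factors $\prod_{i\in I(D)}\int_{\RR^d}g_{j_i}\,d\vv$, the desired equality $\widehat{c}_D=\widehat{c}_{D''}$. This argument runs parallel to Proposition~\ref{Computation of main coeff}, except that only the singleton rows are peeled off rather than every row; verifying carefully that the primitive-vector and rank constraints factorise as needed is where the bulk of the technical work lies.
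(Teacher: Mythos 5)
Your overall blueprint — binomial expansion, regrouping by the larger matrix, telescoping cancellation — matches the paper's strategy, and you correctly identified the coefficient comparison between a matrix and its singleton-row reduction as the crux of the argument. However, the identity you claim, $\widehat{c}_D=\widehat{c}_{D''}$, is wrong, and the error propagates to make the telescoping fail. The correct relation (in your notation, where $D$ is the larger matrix obtained by inserting $s=|I(D)|$ singleton rows into $D''$) is
\[
\widehat{c}_D=\frac{\widehat{c}_{D''}}{\zeta(d)^s},
\]
which is what the paper establishes. You can already see the discrepancy at $k=1$: the matrix $D=(1)$ has one singleton row, its reduction is the empty $0\times 0$ matrix with $\widehat{c}_{\emptyset}=1$, yet Siegel's primitive mean value formula forces $\widehat{c}_{(1)}=1/\zeta(d)$, not $1$. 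Similarly, for $k=2$ and $D=\mathrm{Id}_2$ the explicit second-moment formula for the primitive Siegel transform gives $\widehat{c}_{\mathrm{Id}_2}=1/\zeta(d)^2$.

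The flaw in your derivation is the step "cancelling the common factors $\prod_{i\in I(D)}\int_{\RR^d} g_{j_i}\,d\vv$": the factors appearing on the two sides of \eqref{Riesz} are not in fact common. When you peel off the singleton rows in the $X_d$-integral, the corresponding factor is an average of primitive Siegel transforms $\widehat{g_{j_i}}$, whose mean is $\frac{1}{\zeta(d)}\int g_{j_i}\,d\vv$, not $\int g_{j_i}\,d\vv$; this is exactly where the mean of the primitive transform differs from the non-primitive one, and it is what produces the $\zeta(d)^{-s}$ factor. Moreover the $X_d$-integral does not factorize as a product of expectations because the variables are coupled through the common random lattice (and through the rank constraint), so a more careful argument is needed to extract that factor: the paper uses the sequence of normalized ball-indicator test functions $F_R$, Schmidt's almost-everywhere counting theorem for $P(g\ZZ^d)\cap B_R$, and dominated convergence to make this rigorous. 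Finally, note that with your claimed identity the telescoping sum over subsets $A$ of the singleton-column set would give $\sum_{A}(-1)^{|A|}\zeta(d)^{-|A|}=(1-\zeta(d)^{-1})^s\neq 0$ rather than $0$, so the unwanted terms do not cancel; only the corrected relation $\widehat{c}_D=\widehat{c}_{D''}/\zeta(d)^s$, combined with $c_j=\frac{1}{\zeta(d)}\int f_j\,d\vv$, makes the sign-factors alternate cleanly to $(1-1)^s=0$.
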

\begin{proof}
Since $\PSiegel{k}{F}(\Lambda)=\prod_{j=1}^k \left(\widehat{f_j}(\Lambda)-\int_{\RR^d} f_j d\vv\right)$ for any $\Lambda\in X_d$,
\begin{equation}\label{eq 5: New Rogers}\begin{split}
&\int_{X_d} \PSiegel{k}{F} (g\ZZ^d) d\mu(g)
=\sum_{A\subseteq \{1,\ldots, k\}}(-1)^{a}\prod_{j\in A}\left(\frac 1 {\zeta(d)}\int_{\RR^d} f_j d\vv \right)\times\\
&\hspace{1.8in}
\left(\sum_{r=1}^{k-a} \sum_{q\in \NN} \sum_{D\in \hatcal D^{k-a}_{r,q}}
\widehat c_D \int_{(\RR^d)^r} \prod_{j\in A^c} f_j \left(\frac 1 q (\vv_1, \ldots, \vv_r)D \right) d\vv_1 \cdots d\vv_r \right),
\end{split}\end{equation}
where $a=\# A$ and $A^c=\{1, \ldots, k\}-A$. It is not hard to show that for a given set $A$ and a matrix $D\in \hatcal D^{k-a}_{r,q}$, there is a unique $D''\in \mathcal D^k_{r+a,q}$ such that
\begin{equation}\label{D and D''}\begin{split}
&\prod_{j\in A}\int_{\RR^d} f_j d\vv \int_{(\RR^d)^r} \prod_{j\in A^c} f_j \left(\frac 1 q (\vv_1, \ldots, \vv_r)D\right) d\vv_1 \cdots d\vv_r\\
&\hspace{1in}=\int_{(\RR^d)^{r+a}} \prod_{j=1}^k f_j \left(\frac 1 q (\vv_1, \ldots, \vv_r) D''\right) d\vv_1 \cdots d\vv_{r+a}.
\end{split}\end{equation}
Precisely, $D''$ is defined as follow. Denote $A=\{j_1, \ldots, j_a\}$. Put $\{j'_1, \ldots, j'_{k-a}\}=\{1,\ldots, k\}-A$ and $\{i'_1, \ldots, i'_r\}=\{1, \ldots, r+a\}-A$ (notice that  $A\subseteq\{1, \ldots, r+a\}$). Then
\[
D''_{ij}=\left\{\begin{array}{cl}
q, &\text{if } i=j=j_s,\;\text{for some}\; s=1, \ldots, a;\\
D_{st}, &\text{if } i=i'_s,\; j=j'_{t};\\
0, &\text{otherwise.}
\end{array}\right.
\]

We claim that $D''\in \hatcal D^k_{r+a,q}$ and moreover,
\[
\widehat c_{D''}=\frac{\widehat c_{D}}{\zeta(d)^a}.
\]
After reordering the set $\{f_1, \ldots, f_k\}$ if necessary, we may assume that $A=\{1,\ldots, a\}$ hence $D''$ is the block diagonal matrix
\[
D''=\left(\begin{array}{cc}
q\Id_a & \\
& D \end{array}\right).
\]

Pick any $(\vw_{a+1}, \ldots, \vw_{a+r})\in \widehat{\Phi}_D$, where $\widehat{\Phi}_D$ is the set given as in \eqref{Set Phi}. Choose any $\vw_1,\ldots, \vw_a\in P(\ZZ^d)$ such that $\{\vw_1, \ldots, \vw_{a+r}\}$ is linearly independent (which is possible since $a+r\le k<d$). It follows clearly that $\frac 1 q \left(\vw_1, \ldots, \vw_{a+r}\right)D''\in P(\ZZ^d)^k$, which shows that $D''\in \hatcal D^k_{r+a,q}$.

Next, let us show that $\widehat c_{D''}=\widehat c_D/\zeta(d)^a$. For the sake of notational convenience, one can further assume that $I_D=\{1, \ldots, r\}$ so that the first $(a+r)\times (a+r)$ minor of $D''$ is $q\Id_{a+r}$.
Fix any $(\vw_{a+1}, \ldots, \vw_{a+r})\in \widehat{\Phi}_D$.
It follows that
\[
\widehat{\Phi}_{D''}=\left\{(\vw_1, \ldots, \vw_a)\in P(\ZZ^d)^a: \rk(\vw_1, \ldots, \vw_a, \vw_{a+1}, \ldots, \vw_{a+r})=a+r\right\}\times \widehat{\Phi}_D.
\]
Notice that the former set in (R.H.S) is clearly independent to the choice of $(\vw_{a+1}, \ldots, \vw_{a+r})\in \widehat{\Phi}_D$.

We will utilize the sequence of test functions $F_R$ on $(\RR^d)^k$ defined as follows.
\[
F_R(\vv_1, \ldots, \vv_k)=\frac 1 {\vol(B_R(\origin))^{a+r}}
\prod_{j=1}^{a+r} I_{B_R(\origin)} \times \prod_{j=a+r+1}^k I_{B_{cR}(\origin)}.
\]
Here, $B_R(\origin)\subseteq \RR^d$ is the ball of radius $R$ centered at the origin, $I_A$ is the indicator function of a Borel set $A\subseteq \RR^d$, and $c>r\max\{|D_{ij}|,1\}$, where $D=(D_{ij})$. The constant $c$ is chosen so that for any $(\vv_{a+1}, \ldots, \vv_{a+r})\in (\RR^d)^r$, it follows that
\[
(\vv_{a+1}, \ldots, \vv_{a+r})\in \prod_{j=1}^r B_R(\origin)
\;\Leftrightarrow\;
\frac 1 q(\vv_{a+1}, \ldots, \vv_{a+r})D \in \prod_{j=1}^r B_R(\origin)\times \prod_{j=r+1}^{k-a} B_{cR}(\origin).
\]
It follows directly from \eqref{Riesz} and the above property that
\[
\widehat{c}_{D''}
=\lim_{R\rightarrow \infty} \int_{X_d}
\sum_{\scriptsize \begin{array}{c}
(\vv_1, \ldots, \vv_k)\\
\in\frac 1 q \widehat{\Phi}_{D''}D''\end{array}}
F_R(g\vv_1, \ldots, g\vv_k) d\mu_d(g)
\]
(note that the above equality holds for any $R>0$, without taking the limit).
We assert that
\[
\widehat{c}_{D''}=\lim_{R\rightarrow \infty}
\int_{X_d} \sum_{\scriptsize\begin{array}{c}
(\vv_1, \ldots, \vv_k)\\
\in P(\ZZ^d)^a\times \frac 1 q \widehat{\Phi}_{D}D
\end{array}}F_R(g\vv_1, \ldots, g\vv_k) d\mu_d(g).
\]
Indeed, the difference between the set $\widehat{\Phi}_{D''}$ and $P(\ZZ^d)^a\times \widehat{\Phi}_D$ is contained in 
\[
\Psi:=\left\{(\vv_1, \ldots, \vv_{a+r})\in (\ZZ^d)^{a+r}: \rk(\vv_1, \ldots, \vv_{a+r})\le a+r-1\right\}.
\]
Since the growth of $\#\left(g\Psi \cap \prod_{j=1}^{a+r} B_R(\origin)\right)$ is $O_g(R^{d(a+r)-1})$ (which is $o_g(\vol(B_R(\origin))^{a+r})$) for any $g\in \SL_d(\RR)$ as $R\rightarrow \infty$ (see \cite[Section 4, Lemma 1]{Rogers55}, together with the classical argument with the Margulis $\alpha$-function, we obtain the assertion.

Before proceeding the computation, observe that by \cite[Theorem 1]{Sch60}, it holds that for almost all $g\ZZ^d\in X_d$,
\[
\lim_{R\rightarrow \infty} \frac 1 {\vol(B_R(\origin))} \#\left\{\vv \in P(\ZZ^d) : \|g\vv\|<R\right\}= \frac 1 {\zeta(d)}.
\]
Hence it follows that for almost all $g\ZZ^d\in X_d$,
\begin{equation}\label{eq 2: New Rogers}\begin{split}
&\lim_{R\rightarrow \infty} \frac 1 {\vol(B_R(\origin))^a} \#\left\{(\vv_1, \ldots, \vv_{a}) \in P(\ZZ^d)^a : \|g\vv_i\|\le R\right\}\\
&\hspace{0.5in}=\lim_{R\rightarrow \infty} \left(\frac 1 {\vol(B_R(\origin))} \#\left\{\vv \in P(\ZZ^d) : \|g\vv\|< R\right\}\right)^a
= \frac 1 {\zeta(d)^a}.
\end{split}\end{equation}

Now, applying Lebesgue's dominated convergence theorem and using \eqref{eq 2: New Rogers}, it follows that
\[\begin{split}
\widehat{c}_{D''}&=\int_{X_d}
\Big(\lim_{R\rightarrow \infty}\sum_{\scriptsize \begin{array}{c}
(\vv_1, \ldots, \vv_a)\\
\in P(\ZZ^d)^a
\end{array}}\frac 1 {\vol(B_R(\origin))^a}\prod_{j=1}^a I_{B_R(\origin)} (g\vv_1, \ldots, g\vv_a)\Big)\times\\
&\hspace{0.5in}\Big(\lim_{R\rightarrow \infty} \sum_{\scriptsize \begin{array}{c}
(\vv_{a+1}, \ldots, \vv_{k})\\
\in \frac 1 q\widehat{\Phi}_D D
\end{array}}\frac 1 {\vol(B_R(\origin))^r}\prod_{j=1}^r I_{B_R(\origin)}\times \prod_{j=r+1}^{k-a} I_{B_{cR}(\origin)}(g\vv_{a+1}, \ldots, g\vv_k)\Big)d\mu_d(g)\\
&=\lim_{R\rightarrow \infty} \int_{X_d} \frac 1 {\zeta(d)^a} 
\sum_{\scriptsize \begin{array}{c}
(\vv_{a+1}, \ldots, \vv_{k})\\
\in \frac 1 q\widehat{\Phi}_D D
\end{array}}\frac 1 {\vol(B_R(\origin))^r}\prod_{j=1}^r I_{B_R(\origin)}\times \prod_{j=r+1}^{k-a} I_{B_{cR}(\origin)}(g\vv_{a+1}, \ldots, g\vv_k) d\mu_d(g)\\
&=\frac 1 {\zeta(d)^a} \widehat{c}_D.
\end{split}\]

\vspace{0.1in}
The rest of the proof follows from the similar argument to that of \cite[Theorem 2.4]{StSo2019}, which establishes that if there is a row in $D''\in \hatcal D^k_{r,q}$ with a unique nonzero entry, then the summand of integrals associated to $D''$ is annihilated in (L.H.S) of \eqref{eq 0: New Rogers}; conversely, if every row of $D''$ has at least two nonzero entries, then only possible $A$ is the empty set and the integral corresponding to $D''$ survives.  
\end{proof}

\subsection{Proof of Theorem~\ref{Normal distribution} and ~\ref{Brownian motion}}

Let $\phi: \NN\rightarrow \RR_{> 0}$ be a function such that 
\[
\lim_{d\rightarrow \infty} \phi(d)=\infty
\quad\text{and}\quad
\phi(d)=O_\varepsilon\big(e^{\varepsilon d}\big),\;\forall \varepsilon>0.
\]

For each $d\in \NN$ and $c_1, \cdots, c_\ell>0$, let $S_{d,1}, \ldots, S_{d,\ell}$ be Borel measurable subsets of $\RR^d$ such that $S_{d,j}=-S_{d,j}$ and $\vol(S_{d,j})=c_j\phi(d)$ for all $1\le j\le \ell$. Assume further that $S_{d,j} \cap S_{d,j'}=\emptyset$ if $j\neq j'$. Set
\[
\widehat{Z}_{d,j}:=
\frac {\#(P(g\ZZ^d) \cap S_{d,j})- c_j\phi(d)}{\sqrt{2\phi(d)/\zeta(d)}},
\]
where $g\ZZ^d$ is picked randomly in $(X_d, \mu_d)$.

\begin{proposition}\label{Moment for Brownian motion}
For $k_1, \ldots, k_\ell\in \ZZ_{\ge 0}$, it follows that
\[
\lim_{d\rightarrow \infty}
\EE\left(\widehat Z_{d,1}^{k_1} \cdots \widehat Z_{d,\ell}^{k_\ell} \right)=\left\{\begin{array}{cl}
\prod_{j=1}^\ell \big(c_j^{k_j/2}(k_j-1)!!\big), &\text{if }k_1, \ldots, k_\ell\in 2\NN;\\[0.075in]
0, &\text{otherwise.}
\end{array}\right.
\] 
\end{proposition}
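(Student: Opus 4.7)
The plan is to apply Theorem~\ref{New Moment Formula} after a short re-centering. Set $\alpha_d := (2\phi(d)/\zeta(d))^{-1/2}$ and $f_{d,j} := \alpha_d\, I_{S_{d,j}}$, so that $\frac{1}{\zeta(d)}\int f_{d,j}\,d\vv = \alpha_d c_j\phi(d)/\zeta(d)$. Define the centered random variable
\[
\widetilde Z_{d,j}(g\ZZ^d) := \widehat{f_{d,j}}(g\ZZ^d) - \frac{1}{\zeta(d)}\int f_{d,j}\,d\vv.
\]
The deterministic difference $\widehat Z_{d,j} - \widetilde Z_{d,j} = \alpha_d c_j\phi(d)(1 - \zeta(d)^{-1})$ has magnitude $O(\sqrt{\phi(d)}\cdot 2^{-d}) = o(1)$ by $\zeta(d)-1 = O(2^{-d})$ together with the subexponential bound on $\phi$. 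Expanding $\prod_j \widehat Z_{d,j}^{k_j} = \prod_j(\widetilde Z_{d,j} + o(1))^{k_j}$, an induction on $k := k_1 + \cdots + k_\ell$ reduces the statement to computing $\lim_{d\to\infty}\EE\bigl(\prod_{j=1}^\ell \widetilde Z_{d,j}^{k_j}\bigr)$.

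Let $\sigma:\{1,\dots,k\}\to\{1,\dots,\ell\}$ be the coloring with $|\sigma^{-1}(j)| = k_j$. Writing $\prod_j \widetilde Z_{d,j}^{k_j} = \PSiegel{k}{F}$ where $F := \prod_{j=1}^k \bigl(f_{d,\sigma(j)} - \frac{1}{\zeta(d)}\int f_{d,\sigma(j)}d\vv\bigr)$, Theorem~\ref{New Moment Formula} expands the expectation as a sum over $D \in N(\hatcal D^k_{r,q})$. I split this sum into the main piece indexed by $\mathcal M^k_r \cap N(\hatcal D^k_{r,q})$ and the error piece indexed by $(\mathcal R^k_1 \cup \mathcal R^k_2)\cap N(\hatcal D^k_{r,q})$.

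For the error piece, the bound $\widehat c_D \le c_D$ together with $\|f_{d,j}\|_\infty = \alpha_d$ controls its modulus by $\alpha_d^k$ times the estimates of Propositions~\ref{Rogers 55} and \ref{Rogers 56} applied to the indicator functions of sets contained in $\bigcup_j S_{d,j}$ of volume $O(\phi(d))$. This gives an upper bound of order
\[
\alpha_d^k\Bigl(5^{\lfloor k^2/4\rfloor}2^{-d} + 3^{\lfloor k^2/4\rfloor}(3/4)^{d/2}\Bigr)\phi(d)^k = O\bigl(\phi(d)^{k/2}\lambda^d\bigr)
\]
for some $\lambda\in(0,1)$, which tends to $0$ because $\phi(d)^{k/2} = O_\varepsilon(e^{\varepsilon kd/2})$ for every $\varepsilon>0$.

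For the main piece, Proposition~\ref{Computation of main coeff} gives $\widehat c_D = \zeta(d)^{-r}$ for $D \in \mathcal M^k_r$, and by Proposition~\ref{correspondonce with partitions} such $D$ correspond $2^{k-r}$-to-$1$ to partitions $P = \{B_1,\dots,B_r\}$ of $\{1,\dots,k\}$. The integrand factorizes as $\prod_i\prod_{j\in B_i}\alpha_d\,I_{S_{d,\sigma(j)}}(\pm\vv_i)$. Because the $S_{d,j}$ are pairwise disjoint and symmetric, this is nonzero only when each block $B_i$ is monochromatic, in which case the integral over $\vv_i$ equals $\alpha_d^{|B_i|}c_{\sigma(B_i)}\phi(d)$. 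The main piece with $r$ blocks therefore equals
\[
2^{k/2-r}\zeta(d)^{k/2-r}\phi(d)^{r-k/2}\sum_{P}\prod_{i=1}^r c_{\sigma(B_i)},
\]
where $P$ ranges over color-respecting partitions of $\{1,\dots,k\}$ into $r$ blocks of size $\ge 2$ (the lower bound on $|B_i|$ coming from $D \in N(\hatcal D^k_{r,q})$). Hence $r \le k/2$, and since $\phi(d)\to\infty$ and $\zeta(d)\to 1$, only $r=k/2$ contributes in the limit; this forces $k$ to be even and each $B_i$ to be a pair. Thus only color-respecting pair partitions survive, and when every $k_j$ is even there are $(k_j-1)!!$ such pair partitions on color $j$, each pair of color $j$ contributing a factor $c_j$, so the total is $\prod_j c_j^{k_j/2}(k_j-1)!!$; when some $k_j$ is odd, no color-respecting pair partition exists and the limit is $0$. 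The main obstacle is the error bound: the Rogers' estimates carry a factor $\phi(d)^k$ while the normalization only supplies $\alpha_d^k \sim \phi(d)^{-k/2}$, so controlling the residual $\phi(d)^{k/2}$ requires the subexponential hypothesis $\phi(d) = O_\varepsilon(e^{\varepsilon d})$ in an essential way.
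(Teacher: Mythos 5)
Your proof is correct and follows the same route as the paper: apply Theorem~\ref{New Moment Formula}, discard the $\mathcal R^k_1\cup\mathcal R^k_2$ contributions via Propositions~\ref{Rogers 55} and \ref{Rogers 56}, and reduce the surviving main term over $\mathcal M^k_r$ (intersected with $N(\cdot)$, which forces blocks of size $\ge 2$) to a count of color-respecting pair partitions using the $2^{k-r}$-to-one correspondence of Proposition~\ref{correspondonce with partitions} together with Proposition~\ref{Computation of main coeff}. Two places where you are more careful than the paper: (i) the paper applies Theorem~\ref{New Moment Formula} directly to $\widehat Z_{d,j}$, even though $\widehat Z_{d,j}$ is centered by $c_j\phi(d)$ rather than by $c_j\phi(d)/\zeta(d)$, whereas your explicit $\widetilde Z_{d,j}$ re-centering step and the $O(\sqrt{\phi(d)}\,2^{-d})$ estimate clean up this discrepancy (which looks like an inherited typo from Theorem~\ref{Normal distribution}, where the $/\zeta(d)$ is present); and (ii) you track the error term as $O(\phi(d)^{k/2}\lambda^d)$ and invoke the subexponential hypothesis to kill it, while the paper writes $O_k(\sqrt{3/4}^d)$ and leaves the $\phi(d)$-dependence implicit. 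Both are minor but your version is the more honest bookkeeping.
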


\begin{proof}
Let $f_{d,j}$ be the indicator function of $S_{d,j}$ for each $1\le j\le \ell$. Note that $\frac 1 {\zeta(d)} \int f_{d,j} d\vv= \frac {c_j\phi(d)}{\zeta(d)}$. Let $k=k_1+\cdots+k_\ell$.
By Theorem~\ref{New Moment Formula} and Propositions~\ref{Rogers 55} and \ref{Rogers 56}, it follows that
\[\begin{split}
\EE\left(\widehat Z_{d,1}^{k_1} \cdots \widehat Z_{d,\ell}^{k_\ell} \right)
&=\frac 1 {\sqrt{2\phi(d)/\zeta(d)}^k}
\sum_{r=1}^k \sum_{q\in \NN} \sum_{D\in N(\widehat{\mathcal D}_{r,q}^k)}
\int_{(\RR^d)^r}
\prod_{j=1}^\ell f_{d,j}^{k_j} \left(\frac 1 q (\vv_1, \ldots, \vv_r) D\right) d\vv_1 \cdots d\vv_r\\
&\hspace{-0.4in}=\frac 1 {\sqrt{2\phi(d)/\zeta(d)}^k}
\sum_{r=1}^k \sum_{D\in N(\widehat{\mathcal D}_{r,1}^k)\cap \mathcal M^k_r}\int_{(\RR^d)^r}
\prod_{j=1}^\ell f_{d,j}^{k_j} \left((\vv_1, \ldots, \vv_r) D\right) d\vv_1 \cdots d\vv_r
+O_k\left(\sqrt{\frac 3 4}^d\right).
\end{split}\]
Recall that $\mathcal M^k_r$ is the set of matrices in $\mathcal D^k_{r,1}$ such that for each column, there is a unique nonzero entry which is $\pm 1$.

Since we assume that $\{S_{d,j}\}$ is mutually disjoint, the matrices $D\in N(\widehat{\mathcal D}_{r,q}^k)\cap \mathcal M^k_r$ which has nonzero integral values in the summation above, are block diagonal matrices of the form $\diag(D_1, \ldots, D_\ell)$, where each $D_j$ is an $r_j\times k_j$ matrix such that
\begin{itemize}
\item each column, there is a unique nonzero entry which is $\pm 1$;
\item each row, there are at least two nonzero entries. In particular, it follows that $1\le r_j \le \lfloor \frac {k_j} 2 \rfloor$ for each $1\le j\le \ell$.
\end{itemize}

Hence one can estimate the $(k_1, \ldots, k_\ell)$-moment
\[\begin{split}
&\EE\left(\widehat Z_{d,1}^{k_1} \cdots \widehat Z_{d,\ell}^{k_\ell} \right)\\
&=\prod_{j=1}^\ell \left(\frac 1 {\sqrt{2\phi(d)/\zeta(d)}^{k_j}}
\sum_{r_j=1}^{\lfloor \frac {k_j} 2 \rfloor}
\sum_{D\in N(\widehat{\mathcal D}^{k_j}_{r_j,1}) \cap \mathcal M^{k_j}_{r_j}}
\int_{(\RR^d)^{r_j}} f_{d,j}^{k_j} \left((\vv_1, \ldots, \vv_{r_j})D\right) d\vv_1\cdots d\vv_{r_j}
\right)
+O_k\left(\sqrt{\frac 3 4}^d\right).
\end{split}\]
Since $\# N(\widehat{\mathcal D}^{k_j}_{r_j,1}) \cap \mathcal M^{k_j}_{r_j}=O_{k_j, r_j}(1)$ and $\int_{(\RR^d)^{r_j}} f_{d,j}^{k_j} \left((\vv_1, \ldots, \vv_{r_j})D\right) d\vv_1\cdots d\vv_{r_j}=c_j\phi(d)^{r_j}$, we have that
\[
\frac 1 {\sqrt{2\phi(d)/\zeta(d)}^{k_j}}
\sum_{D\in N(\widehat{\mathcal D}^{k_j}_{r_j,1}) \cap \mathcal M^{k_j}_{r_j}}
\int_{(\RR^d)^{r_j}} f_{d,j}^{k_j} \left((\vv_1, \ldots, \vv_{r_j})D\right) d\vv_1\cdots d\vv_{r_j}
=O_{k_j,r_j} \left(\phi(d)^{r_j- \frac {k_j} 2}\right)
\]
which goes to zero as $d$ goes to infinity except $r_j=k_j/2$.  And if $r_j=k_j/2$, it is easy to show that the above summation is $c_j^{k_j/2}(k_j-1)!!$ by the induction on positive even integers. Therefore, the limit of $\EE\left(\widehat Z_{d,1}^{k_1} \cdots \widehat Z_{d,\ell}^{k_\ell} \right)$ as $d$ goes to infinity does not disappear provided that all $k_1, \ldots, k_\ell$ are even integers and the limit in this case is
\[
\lim_{d\rightarrow \infty}\EE\left(\widehat Z_{d,1}^{k_1} \cdots \widehat Z_{d,\ell}^{k_\ell} \right)=\prod_{j=1}^\ell \big( c_j^{k_j/2} (k_j-1)!!\big).
\] 
\end{proof}

\begin{proof}[Proof of Theorem~\ref{Normal distribution}]
This is the direct consequence of the method of moments and Proposition~\ref{Moment for Brownian motion} by putting $\ell=1$ and $c_1=1$. Note that for each $k\in \NN$, Proposition~\ref{Moment for Brownian motion} says that
\[
\lim_{d\rightarrow \infty} \EE\left(\widehat{Z}_d^k\right)
=\left\{\begin{array}{cl}
(k-1)!!, &\text{if } k\in 2\NN;\\[0.05in]
0, &\text{otherwise,}\end{array}\right.
\]
which equals to the $k$-th moment of the normal distribution $\mathcal N(0,1)$.
\end{proof}

\begin{proof}[Proof of Theorem~\ref{Brownian motion}]
The proof is identical with that of \cite[Theorem 1.6]{StSo2019} using Proposition~\ref{Moment for Brownian motion} instead of \cite[Proposition 4.1]{StSo2019}.
Notice that the tightness, which is achieved from the inequality
\[
\EE \left((\widehat{Z}_d(s)-\widehat{Z}_d(r))^2(\widehat{Z}_d(t)-\widehat{Z}_d(s))^2\right)
\ll \left(\sqrt{t} - \sqrt{r}\right)^2,
\quad \text{for any}\; 0\le r\le s\le t\le 1
\]
is a direct consequence of the inequality \cite[(4.5)]{StSo2019}, since we have that $\bigcup_{u\in \NN} N(\widehat{\mathcal D}^4_{1,u})\cap (\mathcal R^4_1 \cup \mathcal R^4_2)=\emptyset$ (see the last two paragraphs of the proof of \cite[Theorem 1.5 and 1.6]{AGH2022} for details).
\end{proof}


\end{document}